\newtheorem{theorem}{Theorem}
\newtheorem{corollary}[theorem]{Corollary}
\newtheorem{lemma}[theorem]{Lemma}
\newtheorem{definition}[theorem]{Definition}
\newtheorem{proposition}[theorem]{Proposition}
\begin{document}

\begin{frontmatter}

%% Title, authors and addresses

%% use the tnoteref command within \title for footnotes;
%% use the tnotetext command for the associated footnote;
%% use the fnref command within \author or \address for footnotes;
%% use the fntext command for the associated footnote;
%% use the corref command within \author for corresponding author footnotes;
%% use the cortext command for the associated footnote;
%% use the ead command for the email address,
%% and the form \ead[url] for the home page:
%%
%% \title{Title\tnoteref{label1}}
%% \tnotetext[label1]{}
%% \author{Name\corref{cor1}\fnref{label2}}
%% \ead{email address}
%% \ead[url]{home page}
%% \fntext[label2]{}
%% \cortext[cor1]{}
%% \address{Address\fnref{label3}}
%% \fntext[label3]{}

\title{Paths and animals in unbounded degree  graphs with repulsion}

%% use optional labels to link authors explicitly to addresses:
%% \author[label1,label2]{<author name>}
%% \address[label1]{<address>}
%% \address[label2]{<address>}

\author[dk]{Dorota K\c{e}pa-Maksymowicz}
\author[jk]{Yuri Kozitsky}

\address[dk]{dkm@umcs.lublin.pl\\
Instytut Matematyki, Uniwersytet Marii Curie-Sk{\l}odowskiej, 20-031 Lublin, Poland}

\address[jk]{jkozi@hektor.umcs.lublin.pl\\
Instytut Matematyki, Uniwersytet Marii Curie-Sk{\l}odowskiej, 20-031 Lublin, Poland
}

\begin{abstract}

A class of countable infinite graphs with unbounded vertex degree is considered. In these graphs, the vertices of large degree `repel' each other, which means that the path distance between two such vertices cannot be smaller than a certain function of their degrees. Assuming that this function increases sufficiently fast, we prove that the number of finite connected subgraphs (animals) of order $N$ containing a given vertex $x$ is exponentially bounded in $N$ for $N$ belonging to an infinite subset $\mathcal{N}_x\subset \mathbb{N}$. Under a less restrictive condition, the same result is obtained for the number of simple paths originated at a given vertex. These results are then applied to a number of problems, including estimating the growth of the Randi\'c index and of the number of greedy animals.
\end{abstract}

\begin{keyword}
%% keywords here, in the form: keyword \sep keyword
unbounded degree graph \sep repulsive graph \sep percolation \sep
Randi\'c index \sep  greedy animal
%% MSC codes here, in the form: \MSC code \sep code
\MSC[2008] 05C63 \sep 05C12 \sep 82B20

\end{keyword}

\end{frontmatter}

%%
%% Start line numbering here if you want
%%
% \linenumbers

%% main text

\section{Introduction}
\label{1S}

 Infinite graphs are used in probabilistic
combinatorics, image processing, and many other domains. In
particular, they serve as underlying discrete metric spaces for
Markov random fields \cite{Burioni,FP,Hag,KK,KKU,KKP,PU,Procacci}.
The structure of such graphs is more accessible for studying if the
vertex degrees are globally bounded. However, in many important
applications it is essential to employ unbounded degree graphs, see
\cite{Hag,KK,KKP}. For these graphs, it is intuitively clear that
their global metric properties can be similar to those of bounded
degree graphs if the vertices with large degree are `sparse', see,
e.g., the Introduction in \cite{Procacci}. In the present work, we
consider two families of `repulsive graphs', in which vertices with large degree `repel' each
other in the sense of Definition \ref{2df} below. For such graphs,
we derive exponential upper bounds for the number of connected
subgraphs of order $N$ which contain a given vertex, valid for large
$N$. These results allow for obtaining similar estimates also for
other metric characteristics, e.g., for the number of vertices in a
ball of radius $N$. In Section \ref{11S}, we introduce necessary
notions and notations and then formulate our main results in
Theorems \ref{1tm}, \ref{2tm} and Corollary \ref{1co}. In Section
\ref{2S}, we describe some applications of these results. Among them
we note an upper estimate for the generalized Randi\'c index
(Proposition \ref{RIpn}) and an almost sure sublinear growth of
weights of greedy graph animals (Proposition \ref{5apn}). In Section
\ref{4S}, we give the proof of the statements just mentioned
preceded by the study of the properties of paths and animals in
repulsive graphs conducted in Section \ref{3S}. Here we introduce
the notion of a tempered graph by imposing restrictions on the
vertex degree growth, see Definition \ref{goodadf}. Then in Lemmas
\ref{gpn2} and \ref{gepn}, we show that the properties stated in
Theorems \ref{1tm} and \ref{2tm} hold for such tempered graphs.
Thereafter, by a technical result obtained in Lemma \ref{anpn} we
prove Lemma \ref{Qpn} which gives us tools for controlling the
vertex degree growth in the repulsive graphs which we study. By
means of these tools we prove that our graphs are tempered, which
yields the proof of Theorems \ref{1tm} and \ref{2tm}, as well as of
Propositions \ref{RIpn} and \ref{5apn}.

\section{Setup and results}
\label{11S}

 Let ${\sf G} = ({\sf V}, {\sf E})$ be a countably
infinite simple graph with no loops. By writing $x \sim y$ we mean
that $x, y \in {\sf V}$ constitute an edge, $\langle x, y \rangle =
\langle y, x \rangle \in {\sf E}$.  We say that such $x$ and $y$ are
adjacent and that they are the endpoints of the edge $\langle x, y
\rangle$. For each $x\in {\sf V}$, the {\it degree} $$n(x)\
\stackrel{\rm def}{=} \  \#\{y\in {\sf V}: y\sim x\}$$ is assumed to
be finite, whereas
\begin{equation}
  \label{N}
  {n}_{\sf G} \ \stackrel{\rm def}{ =} \ \sup_{x\in {\sf V}} n(x),
\end{equation}
can be finite or infinite. A finite connected subgraph, ${\sf A}
\subset {\sf G}$, is called an {\it animal} (also a {\it polymer},
cf. \cite{FP,KP,PU}). By ${\sf V}({\sf A})$ and ${\sf E}({\sf A})$
we denote the set of vertices and edges of ${\sf A}$, respectively.
A {\it path}, $\vartheta$, is a finite sequence of vertices, $\{x_0,
x_1, \dots, x_n\}$, not necessarily distinct, such that $x_{k+1}
\sim x_k$, for all $k=0, \dots , n-1$. Then $\vartheta$ {\it
originates} at $x_0$ and {\it terminates} at $x_n$. Its {\it length}
$|\vartheta|$ is set to be $n$. In a {\it simple path}, all $x_0,
x_1, \dots, x_{n-1}$ are distinct. By ${\sf G}_\vartheta$ we denote
the graph generated by $\vartheta$. That is, its vertex set ${\sf
V}_\vartheta$ consists of those in $\vartheta$, not counting
repeated vertices; the edge set ${\sf E}_\vartheta$ comprises the
edges with both endpoints in  ${\sf V}_\vartheta$. Clearly, each
${\sf G}_\vartheta$ is an animal.

By $\vartheta (x,y)$ we denote a path such that $x_0=x$ and $x_n=y$.
The path distance $\rho(x,y)$ is set to be the length of the
shortest path $\vartheta (x,y)$. A ball ${\sf B}_N(x)$ (resp., a
sphere ${\sf S}_N(x)$), $N\in \mathbb{N}$ and $x\in {\sf V}$, is the
set of $y\in {\sf V}$ such that $\rho(x,y)\leq N$ (resp.,
$\rho(x,y)= N$). For $N\in \mathbb{N}$ and $x\in {\sf V}$, let
$\mathcal{A}_N(x)$ denote the set of all animals such that $x \in
{\sf V}({\sf A})$ and $|{\sf V}({\sf A})|=N$. For such $x$ and $N$,
let also $\varSigma_N(x)$ be the set of all simple paths of length
$N$ originated at $x$. In many applications, see
\cite{Cox,Dob,FP,H,KP,M,PU}, one needs to estimate the growth of the
cardinalities of the mentioned sets as $N\to +\infty$. For a graph ${\sf
G}$ with $n_{\sf G}< \infty$, there exist positive $q_{\sf G}$,
$\bar{q}_{\sf G}$, and ${N}_{\sf G}$ such that the following
estimates hold
\begin{equation}
 \label{U1}
{\rm (a)} \quad \  |\varSigma_N(x) | \leq q_{\sf G}^N, \qquad \quad
{\rm (b)} \quad \ |\mathcal{A}_N (x) | \leq \bar{q}_{\sf G}^N,
\end{equation}
for all $N\geq {N}_{\sf G}$. The first estimate can easily be proven
to hold with $q_{\sf G}= n_{\sf G}$ and $N_{\sf G}=1$, cf.
(\ref{13}) below. The second one is not so immediate, see \cite[Chapter 2]{MM}
where a more general estimate was proved. Note that (b) implies (a).
By (a) in (\ref{U1}) one readily gets
\begin{equation}
  \label{U2}
{\rm (a)} \quad \  |{\sf S}_N(x) | \leq q_{\sf G}^N, \qquad \quad
{\rm (b)} \quad \ |{\sf B}_N (x) | \leq \frac{{q}_{\sf G}}{{q}_{\sf
G}-1}{q}_{\sf G}^N,
\end{equation}
For graphs with $n_{\sf G}= +\infty$, the cardinalities in
(\ref{U1}) can grow faster than exponentially. Furthermore, if (a)
or (b) holds for $N\geq N_*$ with one and the same $N_*$ for all
$x\in {\sf V}$, then $n_{\sf G} < \infty$.

For $x,y\in {\sf V}$, we set
\begin{equation*}
 % \label{6}
 m_{+} (x,y) = \max\{n(x); n(y)\}, \qquad  m_{-} (x,y) = \min\{n(x);
 n(y)\}.
\end{equation*}
\begin{definition}
  \label{2df}
Let $\phi:\mathbb{N} \to (0,+\infty)$ be strictly increasing. By
$\mathbb{G}_{\pm}(\phi)$ we denote the family of graphs, for each of
which there exists an integer $n_*$ such that
\begin{equation}
  \label{7}
  \rho(x,y) \geq \phi(m_{\pm}(x,y)),
\end{equation}
whenever $m_{-} (x,y) > n_{*}$. No restrictions are imposed if
$m_{-} (x,y) \leq n_{*}$.
\end{definition}
Note that, for the first time, a condition like (\ref{7}) appeared in \cite{BD}.
Clearly,
\begin{equation*}
 % \label{10z}
\mathbb{G}_{+}(\phi)\subset \mathbb{G}_{-}(\phi).
\end{equation*}
For  ${\sf G}\in\mathbb{G}_{\pm}(\phi)$, by  (\ref{7}) vertices of
large degrees `repel' each other. That is why we call these graphs
{\it repulsive}. For such a graph ${\sf G}=({\sf V}, {\sf E})$, we
set
\begin{equation}
  \label{NU}
{\sf V}_* = \{ x\in {\sf V} \ | \ n(x) \leq n_*\}, \qquad {\sf
V}_*^c = {\sf V} \setminus {\sf V}_*,
\end{equation}
and consider
\begin{equation*}
%\label{9}
   {\sf K} (x)\
\stackrel{\rm def}{ =} \  \{ y \in {\sf V} \ | \ \rho (y,x) < \phi
\left(n(x)\right)\}.
\end{equation*}
Now let ${\sf G}$ be in $\mathbb{G}_{+}(\phi)$. For  $x\in {\sf
V}^c_*$,   by (\ref{7}) we have ${\sf K} (x){\cap} {\sf V}_*^c =
\{x\}$, i.e., $x$ `repels' all  vertices $y\in{\sf V}_*^c$ from
${\sf K} (x)$. For the sake of convenience, we shall assume that
${\sf K}(x)$ contains the neighborhood of $x$, which is equivalent
to assuming $\phi (4) >1$. Then for any ${\sf G} \in \mathbb{G}_{+}
(\phi)$, we have
\begin{equation*}
%\label{KK}
\phi (n_*+1) > 1.
\end{equation*}
For ${\sf G}\in\mathbb{G}_{-}(\phi)$, $x$ `repels' from ${\sf K}(x)$
only those $y\in {\sf V}_*^c$, for which $n(y) \geq n(x)$.

Our main results are contained in the following two statements.
\begin{theorem}
\label{1tm} Let $\phi:\mathbb{N} \rightarrow (0, +\infty)$ be such
that the following holds
\begin{equation}
\label{U3} \sum_{k=1}^\infty \frac{\log t_{k+1}}{\phi(t_k)} <
\infty,
\end{equation}
for some strictly increasing sequence $\{t_k\}_{k\in
\mathbb{N}}\subset \mathbb{N}$. Then, for each ${\sf
G}\in\mathbb{G}_{-} (\phi)$, there exists $q_{\sf G}
>1$ such that, for any $x\in {\sf V}$, there exists a
strictly increasing sequence $\{N_k\}_{k\in \mathbb{N}}\subset
\mathbb{N}$ such that the estimate
\begin{equation}
  \label{g5m}
\left\vert \mathit{\Sigma}_{N} (x) \right\vert \leq q_{\sf G}^{N}
\end{equation}
holds for all $N= N_k$, $k\in \mathbb{N}$. If ${\sf
G}\in\mathbb{G}_{+} (\phi)$, then, for each $x\in {\sf V}$, there
exists $N_x \in \mathbb{N}$ such that the estimate (\ref{g5m}) holds
for all $N\geq N_x$.
\end{theorem}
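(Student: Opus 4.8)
The plan is to convert the count $|\varSigma_N(x)|$ into an additive ``entropy'' over the degrees met along a path, bound that entropy via the repulsion law (\ref{7}) and the summability (\ref{U3}), and finally control the one remaining quantity, namely the largest degree in the ball ${\sf B}_N(x)$, separately in the two families. First I would prove by induction on $N$ the deterministic estimate
\[ |\varSigma_N(x)| \le \max_{\vartheta\in\varSigma_N(x)}\ \prod_{k=0}^{N-1} n(x_k). \]
Indeed, every simple path of length $N$ from $x$ extends a simple path of length $N-1$ from a neighbour of $x$, so $|\varSigma_N(x)|\le\sum_{y\sim x}|\varSigma_{N-1}(y)|\le n(x)\max_{y\sim x}|\varSigma_{N-1}(y)|$, and the inductive hypothesis closes the argument. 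Taking logarithms, it suffices to bound $\sum_{k=0}^{N-1}\log n(x_k)$ uniformly over $\vartheta=(x_0,\dots,x_N)\in\varSigma_N(x)$.

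The second step is the density bound coming from repulsion. After discarding finitely many initial $t_k$ (which does not affect (\ref{U3})) I may assume $t_1>n_*$, so that (\ref{7}) applies to any two vertices of degree $\ge t_j$. If $x_{k_1},\dots,x_{k_M}$ are the vertices of $\vartheta$ with degree $\ge t_j$, then consecutive ones satisfy $\phi(t_j)\le\phi(m_-(x_{k_i},x_{k_{i+1}}))\le\rho(x_{k_i},x_{k_{i+1}})\le k_{i+1}-k_i$, so their indices are $\phi(t_j)$-separated and $M_j(\vartheta):=\#\{k:n(x_k)\ge t_j\}\le N/\phi(t_j)+1$. Splitting the sum over the degree blocks $[t_j,t_{j+1})$ and using $\log n(x_k)<\log t_{j+1}$ there, a short rearrangement yields
\[ \sum_{k=0}^{N-1}\log n(x_k)\le N\log t_1+\sum_{j\ge1}M_j(\vartheta)\,\log t_{j+1}\le N\Big(\log t_1+\sum_{j\ge1}\frac{\log t_{j+1}}{\phi(t_j)}\Big)+\sum_{j\le i_{\max}}\log t_{j+1}, \]
where $i_{\max}$ is the largest $j$ for which $\vartheta$ carries a vertex of degree $\ge t_j$. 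By (\ref{U3}) the coefficient of $N$ is finite; the boundary sum $\sum_{j\le i_{\max}}\log t_{j+1}$ is the only dangerous term. To tame it I would use that (\ref{7}) forbids ${\sf B}_N(x)$ from containing two vertices $u,v$ with $\phi(n(u))>2N$ and $\phi(n(v))>2N$, since then $\phi(m_-(u,v))>2N\ge\rho(u,v)$, a contradiction. Hence at most one such \emph{exceptional} vertex exists; isolating it, the remaining high-degree vertices lie in blocks with $\phi(t_j)\le 2N$, so the boundary sum is at most $2N\sum_j\log t_{j+1}/\phi(t_j)$, again controlled by (\ref{U3}).

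Writing $D_N=\max\{n(v):v\in{\sf B}_N(x)\}$ for the largest degree in the ball, the previous steps combine into the uniform inequality
\[ \log|\varSigma_N(x)|\le \log D_N + C\,N, \]
with $C$ depending only on $\phi$ and $\{t_k\}$ through (\ref{U3}). Everything now reduces to estimating $\log D_N$. For ${\sf G}\in\mathbb{G}_+(\phi)$ the inequality (\ref{7}) with $m_+$ is decisive: as soon as ${\sf B}_N(x)$ contains two vertices of degree $>n_*$ (otherwise $D_N$ is eventually constant and the claim is trivial), pairing any high-degree $v$ with another shows $\phi(n(v))\le\phi(m_+)\le 2N$, whence $\phi(D_N)\le 2N$. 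Since (\ref{U3}) forces $\phi(s)/\log s\to\infty$, i.e. $\log s=o(\phi(s))$, we get $\log D_N=o(N)$, and (\ref{g5m}) holds for all $N\ge N_x$ with, say, $q_{\sf G}=e^{C+1}$, uniformly in $x$.

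For ${\sf G}\in\mathbb{G}_-(\phi)$ only the minimal degree in a pair is controlled, so degrees along a ray from $x$ may grow arbitrarily fast and $\log D_N$ can be of order $N$; here I would instead establish $\liminf_{N}\log D_N/N=0$ and take for $\{N_k\}$ a subsequence realising this liminf. This is the main obstacle. The mechanism is that the \emph{record} vertices, where $D_N$ jumps, are forced apart by (\ref{7}): if $\liminf\log D_N/N=L>0$ then $D_N\ge e^{(L/2)N}$ for large $N$, and for consecutive records $w,w'$ at distances $R<R'$ with degrees $d<d'$ one has $\rho(x,w')\ge\rho(w,w')-R\ge\phi(d)-R$; because $d\ge e^{(L/2)R}$ makes both $R=o(\phi(d))$ and $\log d=o(\phi(d))$, the plateau preceding $w'$ is far longer than the value $\log D_N=\log d$ carried on it, so $\log D_{R'-1}/(R'-1)\to0$, contradicting $L>0$. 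Sampling $N_k$ just below the records then gives $\log D_{N_k}/N_k\to0$, and again $q_{\sf G}=e^{C+1}$ works uniformly in $x$, while the subsequence $\{N_k\}$ (built from $D_N$) depends on $x$ as the statement allows.
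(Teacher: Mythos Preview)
Your proof is correct and follows essentially the same strategy as the paper: bound $|\Sigma_N(x)|$ by the product of degrees along a path (the paper's (\ref{13})), use repulsion to limit how many vertices of degree $\ge t_j$ a path can visit, and reduce everything to controlling the maximal degree $D_N$ in ${\sf B}_N(x)$, which is exactly the content of the paper's Lemma~\ref{Qpn}. The differences are packaging rather than substance. The paper routes the argument through the abstract notion of a $g$-tempered graph (Definition~\ref{goodadf}) and the ``good animal'' condition $|{\sf V}({\sf A})|\ge\phi(n_{\sf A})/2$ of Lemma~\ref{goodpn1}; this absorbs your ``$+1$'' boundary term cleanly and, via the capacity Lemma~\ref{anpn}, handles Theorems~\ref{1tm} and~\ref{2tm} in one stroke, whereas your argument is tailored to simple paths and uses only the trivial path capacity. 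For $\mathbb{G}_-(\phi)$ the paper \emph{constructs} the subsequence $\{N_k\}$ explicitly in Lemma~\ref{Qpn} by walking out from $x$ to successive record-degree vertices $x_1,x_2,\dots$ and setting $N_k=\rho(x,x_{k+1})-1$; your $\liminf$ contradiction argument reaches the same conclusion along the same record sequence, just phrased indirectly. Either way one obtains $\phi(D_{N_k})\le 2N_k+O(1)$, and then (\ref{U3}) gives $\log D_{N_k}=o(N_k)$ as you note.
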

\begin{theorem}
\label{2tm} Let $\phi:\mathbb{N} \rightarrow (0, +\infty)$ be such
that the following holds
\begin{equation}
\label{U4} \sum_{k=1}^\infty \frac{t_{k+1}\log t_{k+1}}{\phi(t_k)} <
\infty,
\end{equation}
for some strictly increasing sequence $\{t_k\}_{k\in
\mathbb{N}}\subset \mathbb{N}$. Then, for each ${\sf
G}\in\mathbb{G}_{-} (\phi)$, there exists $\bar{q}_{\sf G}
>1$ such that, for any $x\in {\sf V}$, there exists a
strictly increasing sequence $\{N_k\}_{k\in \mathbb{N}}\subset
\mathbb{N}$ such that the estimate
\begin{equation}
  \label{g5m2}
\left\vert \mathcal{A}_N (x) \right\vert \leq \bar{q}_{\sf G}^{N}
\end{equation}
holds for all $N= N_k$, $k\in \mathbb{N}$. If ${\sf
G}\in\mathbb{G}_{+} (\phi)$, then, for any $x\in {\sf V}$, there
exists $\overline{N}_x \in \mathbb{N}$ such that the estimate
(\ref{g5m2}) holds for all $N\geq \overline{N}_x$.
\end{theorem}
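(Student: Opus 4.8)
The plan is to follow the route sketched in the Introduction: bound $|\mathcal{A}_N(x)|$ by a weighted spanning-tree count, control that count through a decomposition of ${\sf B}_{N-1}(x)$ into degree classes together with the separation forced by (\ref{7}), and finally tame the very largest degrees by a temperedness argument (Definition \ref{goodadf}), in which $\mathbb{G}_+(\phi)$ and $\mathbb{G}_-(\phi)$ behave differently. First I would reduce the problem to a combinatorial estimate. Any ${\sf A}\in\mathcal{A}_N(x)$ is connected with $N$ vertices, so ${\sf V}({\sf A})\subset{\sf B}_{N-1}(x)$. Encoding ${\sf A}$ by a rooted spanning tree and identifying each non-root vertex by its parent together with its index among that parent's neighbours, one obtains an estimate of the form $|\mathcal{A}_N(x)|\le C^N\prod_{v}n(v)^{c(v)}$, where $c(v)$ is the number of children of $v$, $\sum_v c(v)=N-1$, and $C$ is absolute (this is the animal analogue of the bound $|\mathit{\Sigma}_N(x)|\le n_{\sf G}^N$ used for simple paths). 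Taking logarithms, the task becomes to bound
\begin{equation*}
S:=\sum_{v\in{\sf V}({\sf A})}c(v)\log n(v)
\end{equation*}
by $c'N$ with $c'$ independent of $x$ and $N$, after which $\bar{q}_{\sf G}=\exp(c'+\log C)$ works.

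Next I would estimate $S$ by splitting ${\sf V}({\sf A})$ along the sequence $\{t_k\}$ of (\ref{U4}). Vertices of degree at most $n_*$ contribute at most $(N-1)\log n_*$, which is harmless. For $t_k>n_*$, call $v$ a class-$k$ vertex if $n(v)\in[t_k,t_{k+1})$; then any two class-$k$ vertices $y,z$ satisfy $m_-(y,z)\ge t_k>n_*$, so (\ref{7}) gives $\rho(y,z)\ge\phi(t_k)$. A packing argument inside the connected animal (disjoint balls about the class-$k$ vertices) then bounds their number by $2N/\phi(t_k)$, while each such $v$ satisfies $c(v)\le n(v)<t_{k+1}$ and $\log n(v)\le\log t_{k+1}$. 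Summing over classes yields
\begin{equation*}
S\le (N-1)\log n_* + 2N\sum_{k}\frac{t_{k+1}\log t_{k+1}}{\phi(t_k)},
\end{equation*}
and the series is finite precisely by (\ref{U4}). This step also explains the hypothesis: for a simple path each visited vertex has a single successor, contributing only $\log t_{k+1}$ and hence condition (\ref{U3}) of Theorem \ref{1tm}; for an animal a vertex may spawn up to $t_{k+1}$ tree-children, contributing $t_{k+1}\log t_{k+1}$ and hence the stronger condition (\ref{U4}). I would package the conclusion of this step as the assertion that every tempered graph (Definition \ref{goodadf}) satisfies (\ref{g5m2}), i.e. as Lemma \ref{gepn}.

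The displayed estimate, however, is only legitimate once the degrees actually occurring in ${\sf B}_{N-1}(x)$ are $o(N)$: a single vertex $w$ of degree $n(w)\gg N$ would already produce $\binom{n(w)}{N-1}$ animals and destroy the bound, and such a $w$ is not forbidden by (\ref{7}), which constrains only pairs of high-degree vertices. Controlling $M(R):=\max\{n(v):v\in{\sf B}_R(x)\}$ is therefore the crux, and is exactly what I would extract from Lemma \ref{Qpn} (itself resting on the technical Lemma \ref{anpn}). The mechanism is that, since $\phi(n_*+1)>1$, (\ref{7}) forces every neighbour of a vertex of degree $>n_*$ to have degree $\le n_*$, while the high-degree vertices are mutually $\phi$-separated; tracking the successive ``record'' degrees along growing balls then shows $M(R)=o(R)$, because (\ref{U4}) makes $\phi$ super-linear. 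In $\mathbb{G}_+(\phi)$ the stronger repulsion forces each new record vertex of degree $m$ to lie at distance $\gtrsim\phi(m)$ from the previous one, so $M(R)=o(R)$ for all $R\ge R_x$, only the first purely local high-degree vertex near $x$ fixing the threshold $\overline{N}_x$; this yields (\ref{g5m2}) for all $N\ge\overline{N}_x$. In $\mathbb{G}_-(\phi)$ only equal-or-larger degrees are repelled, so $M(R)$ may jump to an arbitrarily large value immediately past certain radii while remaining $o(R)$ just before each jump; choosing $\{N_k\}$ to be these pre-jump radii gives (\ref{g5m2}) along the required subsequence.

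The routine parts are the degree-class decomposition and the packing estimate, which are robust and produce a constant $\bar{q}_{\sf G}$ depending only on $\phi$ and the series in (\ref{U4}). The genuine obstacle is the temperedness step: quantitatively bounding $M(R)$ from the repulsion hypothesis, and, in the $\mathbb{G}_-(\phi)$ case, identifying the good radii $N_k$. I expect essentially all the work to lie here, since the whole force of (\ref{7}) and of the summability condition (\ref{U4}) must be spent on preventing a single runaway-degree vertex from inflating $|\mathcal{A}_N(x)|$.
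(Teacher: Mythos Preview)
Your proposal is essentially the paper's proof. The three pillars are the same: (i) reduce $|\mathcal{A}_N(x)|$ to $\exp\bigl(c\sum_{y\in{\sf V}({\sf A})} n(y)\log n(y)\bigr)$; (ii) control that sum by splitting ${\sf V}({\sf A})$ into degree classes $(t_k,t_{k+1}]$ and using the repulsion (\ref{7}) together with the capacity bound of Lemma~\ref{anpn} to get at most $2N/\phi(t_k)$ vertices per class, so that the series (\ref{U4}) bounds $G({\sf A};g)$ with $g(t)=t\log t$; (iii) invoke Lemma~\ref{Qpn} so that, for the selected $N$ (all $N\ge \overline{N}_x$ in $\mathbb{G}_+(\phi)$, a subsequence $\{N_k\}$ in $\mathbb{G}_-(\phi)$), every ${\sf A}\in\mathcal{A}_N(x)$ is ``good'' in the sense of (\ref{14z}) and step (ii) applies. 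This is exactly Lemma~\ref{goodpn1} with $g(t)=t\log t$ followed by Lemma~\ref{gpn2}.

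The one genuine difference is in step (i). The paper doubles every edge of ${\sf A}$ to obtain an Eulerian multigraph, takes an Eulerian closed walk $\vartheta\in\mathit{\Theta}_M(x)$ with $M=2|{\sf E}({\sf A})|$, and applies (\ref{12}); this walk records every edge of ${\sf A}$, so the map ${\sf A}\mapsto\vartheta$ is injective and the bound $\exp\bigl(\sum n(y)\log n(y)\bigr)$ follows directly. Your rooted-spanning-tree encoding, as written, recovers only ${\sf V}({\sf A})$ together with one tree on it, so the map ${\sf A}\mapsto(\text{tree})$ is not injective on $\mathcal{A}_N(x)$: two animals on the same vertex set can share a spanning tree. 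You therefore need an extra factor bounding the number of connected edge sets on a fixed $N$-vertex set, e.g.\ $2^{\sum_{y} n(y)/2}$, which is harmlessly absorbed into the same $g(t)=t\log t$ estimate; alternatively just adopt the Eulerian trick. Finally, the lemma you mean to cite for ``tempered $\Rightarrow$ (\ref{g5m2})'' is Lemma~\ref{gpn2} (animals, $g(t)=t\log t$), not Lemma~\ref{gepn} (simple paths, $g(t)=\log t$); the paper's own proof paragraph has these two labels transposed as well.
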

An immediate corollary of Theorem \ref{1tm} is the following
statement.
\begin{corollary}
  \label{1co}
Let ${\sf G}$ be in $\mathbb{G}_{+} (\phi)$ with $\phi$ obeying
(\ref{U3}). Let also $N_x$, $x\in {\sf V}$, be as in Theorem
\ref{1tm}. Then there exists $B_x>0$ such that, for all $N>N_x$, the
following holds
\begin{equation}
  \label{U5}
{\rm (a)} \quad \  |{\sf S}_N(x) | \leq q_{\sf G}^N, \qquad \quad
{\rm (b)} \quad \ |{\sf B}_N (x) | \leq B_x q_{\sf G}^N.
\end{equation}
\end{corollary}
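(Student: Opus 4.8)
The plan is to read off both estimates from Theorem \ref{1tm}, which for ${\sf G}\in\mathbb{G}_{+}(\phi)$ provides a $q_{\sf G}>1$ and, for each $x$, an $N_x$ with $|\mathit{\Sigma}_N(x)|\leq q_{\sf G}^N$ for all $N\geq N_x$. The bridge from simple paths to spheres and balls is the elementary observation that a \emph{shortest} path realizing $\rho(x,y)$ cannot repeat a vertex, and therefore is a simple path of length $\rho(x,y)$ in the sense fixed in Section \ref{11S}. This is precisely the unbounded-degree analogue of the passage from (\ref{U1})(a) to (\ref{U2}).

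For part (a), I would fix $N>N_x$ and, to each $y\in {\sf S}_N(x)$, assign a shortest path $\vartheta(x,y)$ from $x$ to $y$. Since $\rho(x,y)=N$, this path has length $N$, and being shortest it has pairwise distinct vertices, so $\vartheta(x,y)\in\mathit{\Sigma}_N(x)$. The assignment $y\mapsto\vartheta(x,y)$ is injective, because distinct endpoints $y$ yield paths with distinct terminal vertices. Hence $|{\sf S}_N(x)|\leq|\mathit{\Sigma}_N(x)|\leq q_{\sf G}^N$, which is (a).

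For part (b), I would write ${\sf B}_N(x)$ as the disjoint union $\bigcup_{M=0}^{N}{\sf S}_M(x)$, so that $|{\sf B}_N(x)|=\sum_{M=0}^{N}|{\sf S}_M(x)|$, and split the sum at $M=N_x$. The tail $\sum_{M=N_x+1}^{N}|{\sf S}_M(x)|$ is controlled by part (a) together with a geometric series, giving a contribution bounded by $\frac{q_{\sf G}}{q_{\sf G}-1}q_{\sf G}^N$; the head $\sum_{M=0}^{N_x}|{\sf S}_M(x)|=|{\sf B}_{N_x}(x)|$ is a finite constant $C_x$, its finiteness being guaranteed by the standing assumption that every degree $n(x)$ is finite. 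Because $q_{\sf G}>1$ and $N>N_x$, the constant $C_x$ is absorbed into the exponential factor, e.g.\ by taking $B_x=C_x q_{\sf G}^{-N_x-1}+q_{\sf G}/(q_{\sf G}-1)$, which yields (b).

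I do not expect a serious obstacle here, since the corollary is essentially a bookkeeping consequence of Theorem \ref{1tm}. The only points that will require a little care are verifying that shortest paths are genuinely simple paths of the prescribed length (so that the map into $\mathit{\Sigma}_N(x)$ lands where claimed and is injective) and confirming the finiteness of the initial ball ${\sf B}_{N_x}(x)$, which is what makes the constant $B_x$ well defined.
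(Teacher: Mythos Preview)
Your proposal is correct and follows the same route as the paper: part (a) is exactly the observation $|{\sf S}_N(x)|\leq|\mathit{\Sigma}_N(x)|$ via shortest (hence simple) paths, and part (b) is obtained by summing spheres and absorbing the finite initial ball $|{\sf B}_{N_x}(x)|$ into the constant. The paper records this in one line and chooses $B_x=|{\sf B}_{N_x}(x)|/q_{\sf G}^{N_x}$; your explicit split at $N_x$ with the geometric tail and the constant $B_x=C_x q_{\sf G}^{-N_x-1}+q_{\sf G}/(q_{\sf G}-1)$ is the same idea carried out with more care.
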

\begin{proof}
 By the very definition of ${\sf S}_N(x)$, we have that $|{\sf
 S}_N(x)| \leq |\varSigma_N (x)|$, which yields (a) in (\ref{U5}),
whereas (b) with $B_x =|{\sf B}_{N_x} (x)|/q_{\sf G}^{N_x}$ follows
by
 (a).
\end{proof}
The optimal choice of $\{t_k\}_{k\in \mathbb{N}}$ in (\ref{U3}) seems
to be $t_k = \exp(e^k)$, for big enough $k$. Then the choice of
$\phi$ can be $\phi (t) = \upsilon \log t (\log \log
t)^{1+\epsilon}$, $\epsilon>0$; cf. \cite[Theorem 4]{KKP}.

\section{Applications}
\label{2S}
\subsection{Percolation}

Let ${\sf G}= ({\sf V}, {\sf E})$ be as above. For ${\sf E}' \subset
{\sf E}$, we set ${\sf G}' =({\sf V}, {\sf E}')$. Note that ${\sf
G}'$ need not be  connected. Let now edges $e\in {\sf E}'$ be picked
at random, independently and with the same probability $p$ each.
This defines a probability measure, $\mu_p^{\rm b}$, on the set of
all subsets of ${\sf E}$. The corresponding subgraph ${\sf G}'$ with
randomly picked $e\in {\sf E}'$ is random as well. The event that it
has an infinite connected component (called also {\it cluster})
occurs with probability either zero or one, dependent on the value
of $p$. This is the Bernoulli bond percolation model, cf.
\cite{Gr,Hag}.
\begin{proposition}
  \label{1pn}
Let $\phi$ obey (\ref{U3}) and ${\sf G}$ be in $\mathbb{G}_{-}
(\phi)$, so that (\ref{g5m}) holds. Then no cluster appears
$\mu_p^{\rm b}$-almost surely whenever $p < 1/q_{\sf G}$.
\end{proposition}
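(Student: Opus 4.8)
The plan is a standard first-moment (Peierls-type) argument based on the path estimate (\ref{g5m}). Since ${\sf V}$ is countable and a countable union of $\mu_p^{\rm b}$-null events is null, it suffices to prove that, for each fixed $x\in {\sf V}$, the event $\mathcal{C}(x)$ that $x$ belongs to an infinite cluster has $\mu_p^{\rm b}(\mathcal{C}(x))=0$.

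First I would establish a purely deterministic observation: if $x$ lies in an infinite cluster, then for every $N$ there is an open self-avoiding path of length $N$ originating at $x$. Indeed, the cluster $C(x)$ is infinite, connected, and locally finite (the latter because $n(y)<\infty$ for every $y$), hence it contains vertices $y$ with $\rho(x,y)$ arbitrarily large; deleting loops from any open path in $C(x)$ joining $x$ to such a $y$ produces an open path with all vertices distinct and length at least $\rho(x,y)$. Such a self-avoiding path belongs to $\varSigma_N(x)$ and, having $N$ distinct edges, is open with probability exactly $p^N$. Denoting by $\mathcal{E}_N(x)$ the event that some self-avoiding path of length $N$ from $x$ is open, we thus have $\mathcal{C}(x)\subseteq \mathcal{E}_N(x)$ for every $N$, and in particular $\mathcal{C}(x)\subseteq \mathcal{E}_{N_k}(x)$ for each $k$, where $\{N_k\}_{k\in\mathbb{N}}$ is the sequence from Theorem \ref{1tm}.

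It then remains to estimate $\mu_p^{\rm b}(\mathcal{E}_N(x))$ by the union bound. Since the self-avoiding paths of length $N$ from $x$ number at most $|\varSigma_N(x)|$, each open with probability $p^N$, we get
\begin{equation*}
\mu_p^{\rm b}(\mathcal{E}_N(x)) \leq |\varSigma_N(x)|\, p^N .
\end{equation*}
Evaluating this along $N=N_k$ and inserting (\ref{g5m}) gives $\mu_p^{\rm b}(\mathcal{E}_{N_k}(x)) \leq (p\,q_{\sf G})^{N_k}$, and since $p<1/q_{\sf G}$ implies $p\,q_{\sf G}<1$ while $N_k\to\infty$, the right-hand side tends to $0$. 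Hence
\begin{equation*}
\mu_p^{\rm b}(\mathcal{C}(x)) \leq \inf_{k\in\mathbb{N}} \mu_p^{\rm b}(\mathcal{E}_{N_k}(x)) = 0 ,
\end{equation*}
and summing over the countable set ${\sf V}$ finishes the argument.

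The computation is entirely routine; the one point requiring care is the deterministic reduction of the second paragraph, namely that membership in an infinite cluster forces arbitrarily long open self-avoiding paths from $x$. What makes the $\mathbb{G}_-(\phi)$ case work despite (\ref{g5m}) holding only along the subsequence $\{N_k\}$ is precisely that this reduction supplies a path of length $N_k$ for \emph{every} $k$, so that passing to the infimum over $k$ already yields probability zero.
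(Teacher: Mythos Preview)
Your argument is correct and follows the same first-moment route as the paper's proof: bound the probability of an open simple path of length $N$ from $x$ by $p^N|\varSigma_N(x)|$, apply (\ref{g5m}) along the subsequence $\{N_k\}$, and conclude. The only cosmetic difference is that the paper invokes the Borel--Cantelli lemma where you take the infimum over $k$; since $\mathcal{C}(x)\subseteq \mathcal{E}_{N_k}(x)$ for every $k$, either device gives $\mu_p^{\rm b}(\mathcal{C}(x))=0$.
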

\begin{proof}
Given $x \in {\sf V}$, the probability that there exists at least
one simple path of length $N$ originated at $x$ does not exceed $p^N
|\varSigma_N(x)|$. Then the proof follows by (\ref{g5m}) and the
Borel-Cantelli lemma.
\end{proof}
Now for ${\sf V}'\subset {\sf V}$, let ${\sf E}'\subset {\sf E}$
comprise the edges with both endpoints in ${\sf V}'$. Set ${\sf G}'=
( {\sf V}', {\sf E}')$. Further, suppose that each vertex of ${\sf
V}'$ is picked at random, independently and with the same
probability $p$ each. This defines a probability measure,
$\mu_p^{\rm s}$, on the set of all subsets of ${\sf V}$. Thereby,
the subgraph ${\sf G}'$ is random. The event that it has a cluster
occurs with probability either zero or one, dependent on $p$. The
appearance of a cluster is called the Bernoulli site percolation,
see \cite[Chapter 3]{Gr} or \cite{Hag}.

\begin{proposition}
  \label{2pn}
Let $\phi$ obey (\ref{U4}) and ${\sf G}$ be in
$\mathbb{G}_{-}(\phi)$, so that (\ref{g5m2}) holds. Then no cluster
appears $\mu^{\rm s}_{p}$-almost surely whenever $p < 1/
\bar{q}_{\sf G}$.
\end{proposition}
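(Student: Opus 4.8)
The plan is to mirror the argument for bond percolation given in the proof of Proposition~\ref{1pn}, replacing simple paths by animals and the estimate (\ref{g5m}) by (\ref{g5m2}). The governing observation is that, in site percolation, an infinite open cluster containing a fixed vertex $x$ forces, for every $N$, the existence of an animal ${\sf A}\in \mathcal{A}_N(x)$ all of whose vertices are open. Indeed, if the open cluster $C$ containing $x$ is infinite, then $|{\sf V}(C)|\geq N$, and since $C$ is connected one may extract from it a connected subgraph of order exactly $N$ containing $x$; this subgraph lies in $\mathcal{A}_N(x)$ and all of its vertices are open.

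First I would bound, for a fixed $x$ and a given $N$, the probability $p_N(x)$ that at least one all-open animal of order $N$ containing $x$ exists. Since the vertices are opened independently, each with probability $p$, a given ${\sf A}\in\mathcal{A}_N(x)$ has all its $N$ vertices open with probability $p^N$, so a union bound over $\mathcal{A}_N(x)$ yields
\begin{equation*}
  p_N(x) \leq |\mathcal{A}_N(x)|\, p^N.
\end{equation*}
Invoking Theorem~\ref{2tm}, since ${\sf G}\in\mathbb{G}_{-}(\phi)$ with $\phi$ obeying (\ref{U4}), the estimate (\ref{g5m2}) holds along some strictly increasing sequence $\{N_k\}_{k\in\mathbb{N}}$, whence $p_{N_k}(x)\leq (p\,\bar{q}_{\sf G})^{N_k}$. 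For $p<1/\bar{q}_{\sf G}$ one has $p\,\bar{q}_{\sf G}<1$, and because $\{N_k\}$ is strictly increasing the series $\sum_k (p\,\bar{q}_{\sf G})^{N_k}$ is dominated by the convergent geometric series $\sum_{N\geq 1}(p\,\bar{q}_{\sf G})^N$, so $\sum_k p_{N_k}(x)<\infty$. Writing $E_N$ for the event described above, the event that $x$ lies in an infinite cluster is contained in the event that an all-open animal of order $N_k$ containing $x$ exists for every $k$, hence in $\limsup_k E_{N_k}$. The Borel--Cantelli lemma then gives that $x$ belongs to an infinite cluster with probability zero, and countable subadditivity over $x\in{\sf V}$ completes the argument, since the appearance of a cluster is precisely the union over $x$ of the events $\{x \text{ lies in an infinite cluster}\}$.

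The only delicate point is that, for ${\sf G}\in\mathbb{G}_{-}(\phi)$, the bound (\ref{g5m2}) is guaranteed only along the subsequence $\{N_k\}$ rather than for all large $N$, and I expect this to be the one place requiring care. It turns out to be harmless: an infinite cluster produces an all-open animal of \emph{every} order, so restricting attention to the orders $N_k$ loses nothing, while the summability needed for Borel--Cantelli is exactly the summability of $(p\,\bar{q}_{\sf G})^{N_k}$ along the subsequence, which holds as soon as $p\,\bar{q}_{\sf G}<1$.
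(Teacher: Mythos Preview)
Your argument is correct and follows exactly the paper's approach: bound the probability of an all-open animal of order $N$ containing $x$ by $p^N|\mathcal{A}_N(x)|$, apply (\ref{g5m2}), and conclude via Borel--Cantelli. You have simply spelled out details (the subsequence issue, the extraction of an order-$N$ connected subgraph from an infinite cluster, and the countable union over $x$) that the paper leaves implicit.
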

\begin{proof}
Given $x\in {\sf V}$, the probability that there exists at least one
connected subgraph of order $N$, which contains $x$, does not exceed
$p^N |\mathcal{A}_N (x)|$. Then the proof follows by (\ref{g5m2})
and the Borel-Cantelli lemma.
\end{proof}
Further applications of the above results to models of dependent
percolation, e.g., to the random cluster model, can be developed by
means of cluster expansion techniques
\cite{Dob,FP,KP,MM,PU,Procacci}. Applications in \cite{KK,KKU,KKP}
to Gibbs random fields on the graphs considered here are based on the estimate in
(\ref{g5m}).

\subsection{Randi$\acute{c}$ index}
For a real $\theta$ and an animal, ${\sf A}$, we set
\begin{equation*}
%\label{RI}
R^{\theta}({\sf A})=\sum_{\langle x,y\rangle\in {\sf
E({\sf A})}}[n(x)n(y)]^{\theta}.
\end{equation*}
In mathematical chemistry, large molecules are consideerd as finite
trees. For such a tree ${\sf T}$, $R^{\theta}({\sf T})$ is known
under the name {\it generalized Randi$\acute{c}$} or {\it
connectivity} index, see  \cite{MC}. It turns out that its value is
closely related to chemical properties of the corresponding
substance.

For a vertex $x$ and $N\in \mathbb{N}$, we define
\begin{equation*}
%\label{RI1}
R^{\theta}_{N}(x)= \max_{{\sf A}\in{\mathcal A}_N(x)}
R^{\theta}({\sf A}).
\end{equation*}
\begin{proposition}
\label{RIpn} Let $\phi:\mathbb{N} \rightarrow (0, +\infty)$ be
strictly increasing and such that
\begin{equation}\label{warR}
\sum_{k=1}^\infty \frac{t_{k+1}^{\theta+1}}{\phi(t_k)} < \infty,
\end{equation}
for some strictly increasing sequence $\{t_k\}_{k\in
\mathbb{N}}\subset \mathbb{N}$. Then, for each ${\sf G}\in
\mathbb{G}_{-} (\phi)$, there exists $\tilde{q}_{\sf G}>1$ such
that, for any $x\in {\sf V}$, there exists a strictly increasing
sequence $\{N_k\}_{k\in \mathbb{N}}\subset \mathbb{N}$ such that
\begin{equation}\label{radic}
R^{\theta}_{N}(x)\leq \tilde{q}_{\sf G}^N
\end{equation}
holds for all $N= N_k$, $k\in \mathbb{N}$. If ${\sf
G}\in\mathbb{G}_{+} (\phi)$, then for any $x\in {\sf V}$, there
exists $\widetilde{N}_x \in \mathbb{N}$ such that (\ref{radic})
holds for all $N\geq \widetilde{N}_x$.
\end{proposition}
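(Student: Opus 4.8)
The plan is to reduce the Randi\'c index of an animal to a sum of powers of vertex degrees and then to control that sum by the same degree-growth analysis that underlies Theorem \ref{2tm}. The starting observation is a structural consequence of repulsion: for ${\sf G} \in \mathbb{G}_{-}(\phi)$ (and a fortiori for $\mathbb{G}_{+}(\phi)$), after enlarging $n_*$ if necessary so that $\phi(n_*+1) > 1$, every edge $\langle u, v\rangle$ must have an endpoint of degree at most $n_*$; indeed, if both $n(u), n(v) > n_*$ then $1 = \rho(u,v) \ge \phi(m_{-}(u,v)) \ge \phi(n_*+1) > 1$, a contradiction. Consequently, assigning each edge of an animal ${\sf A}$ to its endpoint of larger degree and using $[n(u)n(v)]^\theta \le (n_* n(v))^\theta$ on each edge so assigned to $v$, one obtains, for $\theta \ge 0$,
\[
R^{\theta}({\sf A}) \le n_*^{\theta} \sum_{v \in {\sf V}({\sf A})} n(v)^{\theta+1}.
\]
(For $\theta < 0$ the bound is immediate, since then $[n(u)n(v)]^\theta \le 1$ and $|{\sf E}({\sf A})| \le n_* N$ by the same structural fact, so (\ref{radic}) holds with any $\tilde{q}_{\sf G} > 1$ for large $N$, without even invoking (\ref{warR}).) It therefore suffices to bound $\sum_{v \in {\sf V}({\sf A})} n(v)^{\theta+1}$ by $c\,N$ uniformly over ${\sf A} \in \mathcal{A}_N(x)$ for the relevant values of $N$.

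For the $\mathbb{G}_{+}(\phi)$ case this can be done directly. Since (\ref{warR}) implies (\ref{U3}) (as $\log t_{k+1} \le t_{k+1}^{\theta+1}$ for $\theta \ge 0$), Corollary \ref{1co} applies and gives $|{\sf B}_N(x)| \le B_x q_{\sf G}^N$ for $N \ge N_x$. Every vertex of an animal in $\mathcal{A}_N(x)$ lies in ${\sf B}_{N-1}(x)$ and all its neighbours lie in ${\sf B}_N(x)$, so its degree is at most $|{\sf B}_N(x)| \le B_x q_{\sf G}^N$; hence $\sum_{v} n(v)^{\theta+1} \le N (B_x q_{\sf G}^N)^{\theta+1}$. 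Combining with the reduction gives $R^{\theta}_N(x) \le n_*^{\theta} B_x^{\theta+1} N\, q_{\sf G}^{(\theta+1)N}$, which for $N$ past some $\widetilde{N}_x$ is at most $\tilde{q}_{\sf G}^N$ for any fixed $\tilde{q}_{\sf G} > q_{\sf G}^{\theta+1}$; note that $\tilde{q}_{\sf G}$ depends only on ${\sf G}$ and $\theta$, as required.

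For ${\sf G} \in \mathbb{G}_{-}(\phi)$ the ball bound is unavailable (Corollary \ref{1co} is stated only for $\mathbb{G}_{+}$, the estimate (\ref{g5m}) holding only along a subsequence), so I would instead bin the vertices of ${\sf A}$ by the sequence $\{t_k\}$ and estimate the number of high-degree vertices in the animal itself. Starting the binning at the first scale exceeding $n_*$ and writing
\[
\sum_{v \in {\sf V}({\sf A})} n(v)^{\theta+1} \le c_0 N + \sum_{k \ge 1} t_{k+1}^{\theta+1}\,\#\{v \in {\sf V}({\sf A}) : n(v) > t_k\},
\]
one uses, for each scale $t_k > n_*$, the fact that vertices of degree exceeding $t_k$ are pairwise at path-distance at least $\phi(t_k)$ by (\ref{7}); the degree-growth control developed in the proof of Theorem \ref{2tm} then yields a bound of the form $\#\{v \in {\sf V}({\sf A}) : n(v) > t_k\} \le c\,N/\phi(t_k)$ for $N$ ranging over a suitable infinite set $\{N_k\}$. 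Summing, the remaining series reproduces exactly $\sum_k t_{k+1}^{\theta+1}/\phi(t_k)$, whose finiteness is (\ref{warR}), so $\sum_v n(v)^{\theta+1} \le c' N$ along that sequence, and hence (\ref{radic}) follows.

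The step I expect to be the genuine obstacle is precisely this last count of high-degree vertices inside a single animal. For a simple path the corresponding count is elementary --- vertices that are $\phi(t_k)$-separated along a path of length $N$ number at most $N/\phi(t_k)+1$ --- which is why Theorem \ref{1tm} needs only the weaker condition (\ref{U3}). An animal, however, is a branching, tree-like object, and a naive packing argument does not control the number of well-separated vertices by $N/\phi(t_k)$ with a universal constant; this is exactly the difficulty resolved by the tempered-graph machinery behind Theorem \ref{2tm}, and it is also the reason the $\mathbb{G}_{-}$ conclusion is restricted to an infinite subsequence of $N$ rather than to all large $N$. Once that count is in hand in the stated $N/\phi(t_k)$ form, the rest of the estimate is routine.
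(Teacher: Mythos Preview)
Your proposal is correct and, for the $\mathbb{G}_{-}(\phi)$ case, essentially coincides with the paper's argument. The paper proves Proposition~\ref{RIpn} by invoking Lemma~\ref{goodpn1} with $g(t)=t^{\theta+1}$: condition~(\ref{warR}) is precisely~(\ref{13z}) for this $g$, so the graph is $g$-tempered, i.e.\ $\sum_{v\in{\sf V}({\sf A})} n(v)^{\theta+1}\le \gamma N$ along the good subsequence. The conversion from this vertex sum back to $R^\theta({\sf A})$, which the paper leaves implicit, is exactly your structural observation that every edge has an endpoint of degree at most $n_*$; and your binning estimate for $\#\{v:n(v)>t_k\}$ is the content of the proof of Lemma~\ref{goodpn1}, where the bound $m_k({\sf A})\le 2N/\phi(t_k)$ comes from the capacity Lemma~\ref{anpn} --- the ``genuine obstacle'' you correctly single out.

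For $\mathbb{G}_{+}(\phi)$ you take a different route, bounding each $n(v)$ crudely by $|{\sf B}_N(x)|$ via Corollary~\ref{1co}. This is valid but wasteful: it forces $\tilde{q}_{\sf G}>q_{\sf G}^{\theta+1}$, whereas the paper simply reuses the same tempered-graph argument (Lemma~\ref{goodpn1} gives \emph{strong} $g$-temperedness for $\mathbb{G}_+$), obtaining the linear bound $R^\theta_N(x)\le n_*^{\theta}\gamma N$ for all $N\ge N_x$ and hence~(\ref{radic}) for \emph{every} $\tilde{q}_{\sf G}>1$. Your detour also rests on Corollary~\ref{1co}, which already encodes the full tempered-graph machinery (Lemmas~\ref{anpn} and~\ref{Qpn}), so it is not in fact more elementary --- you would do better to run the same binning argument in the $\mathbb{G}_+$ case as well.
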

The proof of this statement will be given below.

\subsection{Growth of ${\rm Aut}({\sf G})$}

For a ${\sf G}=({\sf V}, {\sf E})$, an automorphism, $\gamma$, is a
bijection ${\sf V}\ni x \mapsto x \gamma\in {\sf V}$ such that $x
\sim y$ implies $x\gamma \sim y\gamma$. The automorphisms constitute
a group, denoted by ${\rm Aut}({\sf G})$. Assume that ${\sf V}$ is
given the discrete topology, and let $\mathcal{T}$ be the weakest
topology on ${\rm Aut}({\sf G})$ in which the maps ${\rm Aut}({\sf
G})\ni\gamma \mapsto x \gamma \in {\sf V} $ are continuous for all
$x\in {\sf V}$. It is known \cite{Adams} that $({\rm Aut}({\sf G}),
\mathcal{T})$ is a locally compact Polish group. By the local
compactness, there exists a right Haar measure on ${\rm Aut}({\sf
G})$, which we denote by $\mu$. For $x\in {\sf V}$, the set
\[
{\sf \Gamma}_x := \{ \gamma \in {\rm Aut}({\sf G}): x \gamma = x\}
\]
is the {\it stabilizer} of $x$. It is compact and open, and thus $0<
\mu({\sf \Gamma}_x) < \infty$, for all $x\in {\sf V}$. Let ${\sf
\Delta}$ stand for a compact neighborhood of the identity of ${\rm
Aut}({\sf G})$. For $n\in \mathbb{N}$, by ${\sf \Delta}^n$ we denote
the set of all products $\gamma_1 \gamma_2 \cdots \gamma_n$ of the
elements of ${\sf \Delta}$.
\begin{proposition}
  \label{4pn}
Let ${\sf G}$ be in $\mathbb{G}_{+} (\phi)$ with $\phi$ obeying
(\ref{U3}), and let ${\sf \Delta}$ be as above. Then there exist
$C>0$ and $N_* \in \mathbb{N}$ such that, for all $N\geq N_*$, the
following holds
\begin{equation}
  \label{U7}
  \mu({\sf \Delta}^N) \leq C q_{\sf G}^N,
\end{equation}
where $q_{\sf G}$ is the same as in (\ref{g5m}).
\end{proposition}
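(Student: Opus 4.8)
The plan is to convert the Haar-measure growth of ${\sf \Delta}^N$ into the vertex growth of a single orbit, and then to feed the latter into Corollary \ref{1co}. Fix the base vertex $x$ and abbreviate ${\sf \Gamma}={\sf \Gamma}_x$. Since enlarging ${\sf \Delta}$ only increases $\mu({\sf \Delta}^N)$, I would first replace ${\sf \Delta}$ by ${\sf \Gamma}{\sf \Delta}{\sf \Gamma}$; this is again a compact neighbourhood of the identity, it is bi-invariant under ${\sf \Gamma}$, and (using $x{\sf \Gamma}=x$ and that elements of ${\sf \Gamma}$ are $\rho$-isometries fixing $x$) it displaces $x$ by exactly the same amount as ${\sf \Delta}$. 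Bi-invariance makes every power ${\sf \Delta}^N$ a union of right cosets ${\sf \Gamma}\gamma$, since ${\sf \Gamma}{\sf \Delta}^N={\sf \Delta}^N$. Because ${\sf \Gamma}$ stabilizes $x$, the orbit map $\gamma\mapsto x\gamma$ is a bijection between these right cosets and the points of the orbit $x{\sf \Delta}^N$, which is finite as the continuous image of the compact set ${\sf \Delta}^N$ in the discrete space ${\sf V}$. Right invariance of $\mu$ gives $\mu({\sf \Gamma}\gamma)=\mu({\sf \Gamma})$ for every $\gamma$, whence
\[
\mu({\sf \Delta}^N)=\mu({\sf \Gamma})\,\bigl|x{\sf \Delta}^N\bigr|.
\]
This reduces the problem to an exponential bound on $|x{\sf \Delta}^N|$.

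The geometric heart of the argument is the containment $x{\sf \Delta}^N\subseteq {\sf B}_{rN}(x)$ with $r=\max_{y\in x{\sf \Delta}}\rho(x,y)$. Writing a point of $x{\sf \Delta}^N$ as $w=x\gamma_1\cdots\gamma_N$, I would introduce the intermediate vertices $v_j=x\gamma_j\gamma_{j+1}\cdots\gamma_N$ for $j=1,\dots,N$, together with $v_{N+1}=x$, so that $v_1=w$. Setting $\pi_j=\gamma_{j+1}\cdots\gamma_N$ (with $\pi_N$ the identity), one has $v_j=(x\gamma_j)\pi_j$ and $v_{j+1}=x\pi_j$; since $\pi_j$ is an automorphism and hence a $\rho$-isometry, $\rho(v_j,v_{j+1})=\rho(x\gamma_j,x)\leq r$. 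The triangle inequality along $v_{N+1},v_N,\dots,v_1$ then gives $\rho(x,w)\leq rN$, as claimed. Combining this with the previous display yields $\mu({\sf \Delta}^N)\leq \mu({\sf \Gamma})\,|{\sf B}_{rN}(x)|$. The telescoping is taken from the right on purpose: decomposing $w$ from the left would replace each single factor $\gamma_j$ by a conjugate whose displacement of $x$ need not be bounded by $r$, so the isometry cancellation would be lost.

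Finally, by part (b) of (\ref{U5}) in Corollary \ref{1co} we have $|{\sf B}_{rN}(x)|\leq B_x q_{\sf G}^{rN}$ once $rN>N_x$, so that $\mu({\sf \Delta}^N)\leq \mu({\sf \Gamma})B_x\,q_{\sf G}^{rN}$ for all $N>N_x/r$. When the neighbourhood ${\sf \Delta}$ displaces $x$ by at most one edge, i.e.\ $x{\sf \Delta}\subseteq {\sf B}_1(x)$ and hence $r=1$, this is exactly (\ref{U7}) with $C=\mu({\sf \Gamma})B_x$ and any $N_*>N_x$. The step I expect to require the most care is precisely this matching of the exponent: the construction produces the base $q_{\sf G}^{r}$, and it collapses to $q_{\sf G}$ only when each factor coming from ${\sf \Delta}$ moves $x$ across a single edge. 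Controlling the displacement $r$ — equivalently, ensuring that one group-theoretic step corresponds to one graph step — is therefore the crux; for a general compact neighbourhood the method gives (\ref{U7}) with $q_{\sf G}$ replaced by $q_{\sf G}^{r}$, and it is the normalization $r=1$ that pins the base down to $q_{\sf G}$ itself.
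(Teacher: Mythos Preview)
Your argument is correct and follows the same route as the paper: reduce $\mu({\sf \Delta}^N)$ to a bound on $|{\sf B}_{rN}(x)|$ and then invoke Corollary~\ref{1co}. The only difference is that the paper obtains the inequality $\mu({\sf \Delta}^N)\leq \mu({\sf \Gamma}_x)\,|{\sf B}_{cN}(x)|$ by citing Proposition~3.2 of Adams--Lyons, whereas you reprove it from scratch via the coset decomposition of the ${\sf \Gamma}$-biinvariant enlargement of ${\sf \Delta}$ and the telescoping isometry estimate; your direct argument is exactly the content of that cited proposition.

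Your caution about the exponent is well placed and is not a defect of your proof relative to the paper's: the paper's own application of (\ref{U5})(b) produces $\mu({\sf \Delta}^N)\leq \mu({\sf \Gamma}_x)B_x\,q_{\sf G}^{\,cN}$, i.e.\ base $q_{\sf G}^{\,c}$, and the constant it records, $C=B_x\mu({\sf \Gamma}_x)q_{\sf G}^{\,c}$, does not actually convert this into $Cq_{\sf G}^{N}$ unless $c=1$. So the discrepancy you isolate between $q_{\sf G}^{r}$ and $q_{\sf G}$ is present in the paper as well; both arguments deliver exponential growth with base $q_{\sf G}^{\,r}$ (equivalently $q_{\sf G}^{\,c}$), and the literal claim ``$q_{\sf G}$ is the same as in (\ref{g5m})'' holds only under the normalization $r=1$ that you single out.
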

\begin{proof}
By Proposition 3.2 in \cite{Adams}, for each $x\in {\sf V}$, there
exists $c>0$ such that, for all $N\in \mathbb{N}$, the following
estimate holds
\[
 \mu({\sf \Delta}^N)   \leq \mu({\sf \Gamma}_x) |{\sf B}_{cN}
 (x)|.
\]
We fix $x$ and apply (b) of (\ref{U5}), which yields (\ref{U7}) with
$N_* = N_x$ and $C = B_x \mu({\sf \Gamma}_x)q^c_{\sf G}$.
\end{proof}

\subsection{Greedy animals}

Let $\{Y_x: x\in {\sf V}\}$ be a family of independent positive
random variables (weights). For $N\in \mathbb{N}$ and $x\in {\sf V}$, we define
\begin{equation}
\label{SNx}
S_N(x)=\max_{{\sf A}\in \mathcal{A}_N(x)} \sum_{x\in {\sf V}({\sf A})} Y_x.
\end{equation}
Those ${\sf A}\in \mathcal{A}_N(x)$, for which the maximum in
(\ref{SNx}) is attained are called {\it greedy  animals}, see
\cite{Cox} for motivating examples, applications, and further details.

Let $P_x$ be the probability measure on $[0,+\infty)$ which is the
law of $Y_x$. Then the law of the family $\{Y_x : x\in {\sf
V}\}$ is defined as a product measure in a standard way. We assume
that, for each $x\in {\sf V}$,
\begin{equation}
\label{Ex} w_x (t):= \log \mathbb{E} e^{tY_x } <\infty,
\end{equation}
for a certain $t>0$. Thus, $w_x$ is analytic in some neighborhood of
$t=0$, and hence $w_x(t)/t \to v_x$ as $t\to 0$, where
\begin{equation}
  \label{Ex1}
v_x := \mathbb{E} Y_x.
\end{equation}
\begin{proposition}
  \label{5apn}
Let ${\sf G}$ be in $\mathbb{G}_{-} (\phi)$ with $\phi$ satisfying
(\ref{U4}). Suppose also that
\begin{equation}
  \label{Ex2}
 v_x \leq C n(x) \log n(x),
\end{equation}
for some $C>0$ and each $x\in {\sf V}$. Then there exists $Y>0$ such
that, for each $x\in {\sf V}$, there exists an increasing sequence,
$\{N_k\}_{k\in \mathbb{N}}\subset \mathbb{N}$, for which
\begin{equation}
  \label{ex3}
 \limsup_{k\to +\infty} \frac{1}{N_k}S_{N_k}(x) \leq Y \qquad \ \ {\rm with} \ {\rm probability} \ 1.
\end{equation}
\end{proposition}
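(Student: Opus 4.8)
The plan is to combine an exponential (Chernoff) estimate for the weight of a single animal with the counting bound of Theorem \ref{2tm} and the Borel--Cantelli lemma. Fix $x \in {\sf V}$ and, invoking Theorem \ref{2tm} (whose hypothesis \eqref{U4} is assumed here), choose the strictly increasing sequence $\{N_k\}$ along which $|\mathcal{A}_{N}(x)| \le \bar{q}_{\sf G}^{N}$, cf.\ \eqref{g5m2}, for $N = N_k$. For an animal ${\sf A}$ write $W({\sf A}) = \sum_{y \in {\sf V}({\sf A})} Y_y$, so that $S_N(x) = \max_{{\sf A}\in\mathcal{A}_N(x)} W({\sf A})$ by \eqref{SNx}. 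For a threshold $Y>0$ to be fixed later and any $t>0$, the identity $\{S_N(x)\ge YN\}=\bigcup_{\sf A}\{W({\sf A})\ge YN\}$, a union bound, and the exponential Markov inequality give
\begin{equation*}
\mathbb{P}\bigl(S_N(x) \ge YN\bigr) \le \sum_{{\sf A}\in\mathcal{A}_N(x)} e^{-tYN}\, \mathbb{E}\, e^{tW({\sf A})}.
\end{equation*}
Since the $Y_y$ are independent, $\mathbb{E}\,e^{tW({\sf A})} = \exp\bigl(\sum_{y\in{\sf V}({\sf A})} w_y(t)\bigr)$ with $w_y$ as in \eqref{Ex}.

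Next I would control the cumulant sum $\sum_{y\in{\sf V}({\sf A})} w_y(t)$ uniformly over ${\sf A}\in\mathcal{A}_N(x)$ at one fixed value $t=t_0$. The key structural input is that in a graph from $\mathbb{G}_-(\phi)$ the repulsion \eqref{7} forces vertices of comparable large degree to be mutually distant, so a packing argument of the type underlying Theorem \ref{2tm} bounds the number of vertices of ${\sf A}$ with degree in $(t_k,t_{k+1}]$ by a multiple of $N/\phi(t_k)$; summing over $k$ and using \eqref{U4} yields a constant $C''$ with $\sum_{y\in{\sf V}({\sf A})} n(y)\log n(y) \le C'' N$ for every ${\sf A}\in\mathcal{A}_N(x)$, $N=N_k$. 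Combining this with the degree-weighted mean bound \eqref{Ex2} gives $\sum_{y\in{\sf V}({\sf A})} v_y \le C C'' N$. Finally, the finiteness in \eqref{Ex} and the analyticity yielding $w_y(t)/t \to v_y$, cf.\ \eqref{Ex1}, let me pass, for a fixed small $t_0>0$, from the means to the cumulants, producing a constant $c>0$ with $\sum_{y\in{\sf V}({\sf A})} w_y(t_0) \le c N$ uniformly in ${\sf A}$.

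With these two estimates the proof closes quickly: for $N=N_k$,
\begin{equation*}
\mathbb{P}\bigl(S_{N}(x)\ge YN\bigr) \le \bar{q}_{\sf G}^{N}\, e^{-t_0 Y N + cN} = \exp\Bigl(N\bigl(\log\bar{q}_{\sf G} + c - t_0 Y\bigr)\Bigr).
\end{equation*}
Choosing $Y := (1 + \log\bar{q}_{\sf G} + c)/t_0$, which depends only on the graph-wide constants and hence is the same for every $x$, makes the bracketed factor equal to $-1$, whence $\mathbb{P}(S_{N_k}(x)\ge YN_k)\le e^{-N_k}$ and $\sum_k \mathbb{P}(S_{N_k}(x)\ge YN_k) < \infty$. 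The Borel--Cantelli lemma then shows that almost surely $S_{N_k}(x) < YN_k$ for all large $k$, which is \eqref{ex3}; here the sequence $\{N_k\}$ is the ($x$-dependent) one from Theorem \ref{2tm}, while $Y$ is uniform in $x$. The main obstacle is the uniform cumulant control of the second step: \eqref{Ex} only guarantees a finite exponential moment at a possibly vertex-dependent rate, so the delicate point is to secure a common $t_0>0$ at which $\sum_{y\in{\sf V}({\sf A})} w_y(t_0)$ stays linear in $N$. This is exactly where the interplay between \eqref{Ex}, the bound \eqref{Ex2}, and the temperedness packing estimate behind Theorem \ref{2tm} must be used with care.
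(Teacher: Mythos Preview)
Your proof follows the same skeleton as the paper's --- a Chernoff bound on the weight of a single animal, the $g$-temperedness estimate $\sum_{y\in{\sf V}({\sf A})} n(y)\log n(y)\le \gamma N$ along the sequence $\{N_k\}$ from Lemma~\ref{goodpn1}, and Borel--Cantelli --- so the approach is essentially identical. There are, however, two differences worth noting.

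First, you include an explicit union bound over $\mathcal{A}_{N_k}(x)$ and absorb the resulting factor $\bar q_{\sf G}^{N_k}$ (from Theorem~\ref{2tm}) into the choice of $Y$. The paper does not do this: it passes directly from $\mathbb{P}(S({\sf A})\ge YN_k)\le \exp(-tN_k(Y-\gamma C))$ for a single ${\sf A}$ to the same bound for $\mathbb{P}(S_{N_k}(x)\ge YN_k)$, which is a genuine lacuna. Your version repairs it, at the cost of using Theorem~\ref{2tm} in addition to temperedness.

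Second, the obstacle you isolate at the end --- the absence of a uniform $t_0>0$ at which all $w_y$ are finite and controlled --- is real, and the paper's proof has exactly the same gap: it writes ``which holds for small enough $t>0$'' when replacing $w_x(t)/t$ by $v_x\le C\,n(x)\log n(x)$, but that ``small enough'' depends a priori on the (finitely many) vertices of ${\sf A}$, hence on ${\sf A}$ itself, so the exponent $-tN_k(Y-\gamma C)$ need not be bounded away from zero as $k\to\infty$. Neither proof resolves this without an extra uniformity assumption on the family $\{w_x\}$ (e.g.\ a common radius of analyticity, or a uniform second-moment bound). So your proposal is at least as complete as the paper's own argument, and your caveat is well placed.
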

The proof of this statement will be given below. Let us now make
some comments. The lattice $\mathbb{Z}^d$ can be turned into a graph
by setting $x\sim y$ if $|x-y|=1$. The greedy  animals on
$\mathbb{Z}^d$ were studied in detail in \cite{Cox,Gand,H,M}. In
those papers, however, the weights are supposed to be identically
distributed with law $P_x$ satisfying less restrictive conditions
(as compared to (\ref{Ex})), involving the lattice dimension $d$,
cf. Theorem 1 in \cite{Cox} or Theorem 3.3 in \cite{M}. In the
statement above, we allow the mean value of $Y_x$ to increase in a
controlled way (\ref{Ex2}), which seems to be quite natural for
unbounded degree graphs. In a separate work, we shall study greedy
animals in such graphs in more detail. In particular, we plan to
relax the exponential integrability assumed in (\ref{Ex}).

\section{Further properties of paths and animals}
\label{3S}

\subsection{Counting paths}

We recall that by ${\sf G}_\vartheta = ({\sf V}_\vartheta, {\sf
E}_\vartheta)$ we denote the graph generated by path  $\vartheta$.
For $e\in {\sf E}$, we say that $\vartheta$ traverses $e$ if $e\in
{\sf E}_\vartheta$. We say that $\vartheta = \{x_0, \dots , x_n\}$
leaves $x_{k}$ towards $x_{k+1}$, $k=0, \dots, n-1$. For $x\in {\sf
V}_\vartheta$, let $\nu_\vartheta(x)$ be the number of times
$\vartheta$ leaves $x$. We also set $\nu_\vartheta(x)=0$ if $x$ is
not in ${\sf V}_\vartheta$. Then, for a simple path, $\nu_\vartheta
(x) \leq 1$. Recall that $\mathit{\Sigma}_N (x)$ denotes the
collection of simple paths of length $N\in \mathbb{N}$ originated at
a given $x\in {\sf V}$. Along with this set we also consider
$\mathit{\Theta}_N (x)$ being the collection of paths
$\vartheta=\{x, x_1, \dots , x_N\}$ such that the number of times
$\vartheta$ leaves each $y\in {\sf V}_\vartheta$ towards any $z\in
{\sf V}_\vartheta$ is at most one. Note that this does not mean
$\nu_\vartheta (x) \leq 1$.
\begin{lemma}
  \label{pathpn}
For any $x\in {\sf V}$ and $N\in \mathbb{N}$, it follows that
$\mathit{\Sigma}_N (x) \subset \mathit{\Theta}_N (x)$. Each
$\vartheta \in \mathit{\Theta}_N (x)$ has the properties: (i) each
$e \in {\sf E}_\vartheta$ can be traversed by $\vartheta$ at most
twice; (ii) $\nu_\vartheta (y) \leq n(y)$ for each $y \in {\sf
V}_\vartheta$.
\end{lemma}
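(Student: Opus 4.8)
The plan is to unwind the three definitions at play---\emph{leaves}, \emph{traverses}, and $\nu_\vartheta$---and to argue by direct counting; I expect no deep obstacle here, only the need to keep directed steps and undirected vertices/edges carefully separate.

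First I would establish the inclusion $\mathit{\Sigma}_N(x)\subset\mathit{\Theta}_N(x)$. Given a simple path $\vartheta=\{x_0,\dots,x_N\}$ with $x_0=x$, I would invoke that $x_0,\dots,x_{N-1}$ are pairwise distinct. Since $\vartheta$ leaves $x_k$ (towards $x_{k+1}$) exactly for $k=0,\dots,N-1$, distinctness forces each vertex $y$ to occur as $x_k$ for at most one index $k\le N-1$. Hence $\vartheta$ leaves $y$ at most once, and in particular leaves $y$ towards any given $z$ at most once, which is precisely the membership condition for $\mathit{\Theta}_N(x)$.

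Next, for property (i), I would fix $\vartheta\in\mathit{\Theta}_N(x)$ and an edge $e=\langle y,z\rangle\in{\sf E}_\vartheta$. A step $k$ traverses $e$ precisely when $\{x_k,x_{k+1}\}=\{y,z\}$, which splits into the two ordered cases $(x_k,x_{k+1})=(y,z)$ and $(x_k,x_{k+1})=(z,y)$---that is, leaving $y$ towards $z$, or leaving $z$ towards $y$. The defining condition of $\mathit{\Theta}_N(x)$ bounds each of these two directed counts by one, so $e$ is traversed at most twice.

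Finally, for property (ii), I would again fix $\vartheta\in\mathit{\Theta}_N(x)$ and a vertex $y\in{\sf V}_\vartheta$, and count the steps at which $\vartheta$ leaves $y$, grouped according to their target. Each such step sends $y=x_k$ to some $x_{k+1}=z$ with $z\sim y$ and $z\in{\sf V}_\vartheta$; for each fixed target $z$ the $\mathit{\Theta}_N(x)$ condition permits at most one such step, while the admissible targets are neighbours of $y$ and hence number at most $n(y)$. Summing the per-target bound over these at most $n(y)$ neighbours yields $\nu_\vartheta(y)\le n(y)$. The single point requiring care throughout is the bookkeeping between directed steps $(x_k,x_{k+1})$ and the undirected objects, namely the vertices $x_k$ and the edges $\langle x_k,x_{k+1}\rangle$; once that distinction is pinned down, all three assertions follow immediately.
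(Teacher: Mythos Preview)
Your proof is correct and follows exactly the same approach as the paper's: the paper's proof merely asserts that the inclusion is immediate and that (i) and (ii) both follow from the defining property of $\mathit{\Theta}_N(x)$ (that $\vartheta$ leaves each $y$ towards any given $z$ at most once), while you have spelled out precisely the directed-step counting that makes this work.
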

{\it Proof:} The stated inclusion  is immediate, whereas both (i) and (ii)  follow from
the fact that $\vartheta$ leaves each $x\in {\sf V}_\vartheta$
towards any $y\sim x$ at most once.   $\square$

\begin{lemma}
  \label{pathpn1}
For any $x\in {\sf V}$ and $N\in \mathbb{N}$, it follows that
\begin{equation}
  \label{12}
\left\vert \mathit{\Theta}_N (x) \right\vert  \leq \max_{\vartheta
\in \mathit{\Theta}_N (x)} \exp\left(\sum_{y\in {\sf V}_\vartheta}
n(y) \log n(y) \right).
\end{equation}
\end{lemma}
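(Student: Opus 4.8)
The plan is to set up an injective encoding of each path $\vartheta\in\mathit{\Theta}_N(x)$ by the record of its departures at each visited vertex, and then to count the admissible records. For $y\in{\sf V}_\vartheta$, I list the neighbours towards which $\vartheta$ leaves $y$ in the order in which the successive departures occur, obtaining an \emph{exit-word} $s_y$. By the defining property of $\mathit{\Theta}_N(x)$ together with Lemma \ref{pathpn}, the entries of $s_y$ are pairwise distinct and its length equals $\nu_\vartheta(y)\le n(y)$. I would first verify that, given the fixed origin $x$, the family $(s_y)_{y\in{\sf V}_\vartheta}$ determines $\vartheta$ uniquely: starting from $x$ and maintaining for each vertex a counter of how many times it has already been left, one reconstructs the path step by step by always following the next unused entry of the current vertex's exit-word. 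Hence $\vartheta\mapsto(s_y)_{y\in{\sf V}_\vartheta}$ is injective, and $\left\vert\mathit{\Theta}_N(x)\right\vert$ is bounded by the number of admissible families of exit-words.

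Next I would count the exit-words vertex by vertex. At a fixed $y$, an exit-word is an injective string of length $\nu_\vartheta(y)$ over the $n(y)$ neighbours of $y$, so the number of possibilities is $n(y)!/(n(y)-\nu_\vartheta(y))!$, which is a product of $\nu_\vartheta(y)$ factors each at most $n(y)$ and is therefore $\le n(y)^{\nu_\vartheta(y)}\le n(y)^{n(y)}$ (using $0\le\nu_\vartheta(y)\le n(y)$ and $n(y)\ge 1$). Multiplying over the vertices of the support and passing to $\exp\!\left(\sum_{y\in{\sf V}_\vartheta}n(y)\log n(y)\right)$ produces, for each single path, the quantity on the right-hand side of (\ref{12}); the remaining task is to show that the total count is dominated by its maximum over $\vartheta$ rather than by a sum over the possible supports.

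This last point is where I expect the main obstacle to lie, since a priori distinct paths have distinct and numerous supports, and the crude estimate given by the number of supports times the maximal product would be both too weak and circular (the number of admissible supports is itself an animal count of the kind we are trying to bound). I would avoid fixing the support in advance and instead organise the count recursively along the path: reading the departures in the order in which they occur, the choice of each exit-word simultaneously reveals the vertices of the support that are reached, so that no independent selection of a support is ever made. Concretely, writing $\left\vert\mathit{\Theta}_N(x)\right\vert$ as the sum over the first exit $z\sim x$ of the number of admissible continuations, and using that the branching at any vertex $y$ is at most $n(y)$ while $y$ is left at most $n(y)$ times, one obtains by induction on $N$ that $\left\vert\mathit{\Theta}_N(x)\right\vert\le\max_{\vartheta}\prod_{y\in{\sf V}_\vartheta}n(y)^{\nu_\vartheta(y)}$, which combined with the pointwise estimate of the previous paragraph yields (\ref{12}). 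The delicate part of this induction is the bookkeeping of the already used directed edges when the path returns to a previously visited vertex, and this is precisely what is controlled by the elementary inequality that a sum over branches is at most the number of branches times the largest branch, the number of branches at $y$ being bounded by $n(y)$.
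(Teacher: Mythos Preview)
Your proposal is correct, and the argument you land on in the third paragraph is exactly the paper's proof: the authors simply write
\[
|\mathit{\Theta}_N(x)|\le \sum_{y\sim x}|\mathit{\Theta}_{N-1}(y)|\le n(x)\max_{y\sim x}|\mathit{\Theta}_{N-1}(y)|,
\]
iterate to obtain $|\mathit{\Theta}_N(x)|\le \max_{\vartheta}\prod_{k=0}^{N-1}n(x_k)=\max_{\vartheta}\prod_{y\in{\sf V}_\vartheta}n(y)^{\nu_\vartheta(y)}$, and then invoke $\nu_\vartheta(y)\le n(y)$ from Lemma~\ref{pathpn}(ii). Your exit-word encoding in the first two paragraphs is a valid alternative viewpoint, but---as you yourself diagnose---it runs into the problem of counting supports, and you end up discarding it in favour of the recursion; the paper skips this detour entirely and writes the three-line induction directly. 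So the approaches coincide, only your exposition takes a longer path to reach it.
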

{\it Proof:} Obviously,
\[
\left\vert \mathit{\Theta}_N (x) \right\vert \leq  \sum_{y: \ y\sim
x} \left\vert \mathit{\Theta}_{N-1} (y) \right\vert \leq \sup_{y: \
y\sim x}n(x) \left\vert \mathit{\Theta}_{N-1} (y) \right\vert ,
\]
which by the induction in $N$ yields
\begin{eqnarray*}
\left\vert \mathit{\Theta}_N (x) \right\vert & \leq &
\max_{\vartheta \in \mathit{\Theta}_N (x)} n(x) n(x_1) \cdots n(x_{N-1}) \\[.2cm]
 & = & \max_{\vartheta \in \mathit{\Theta}_N (x)}\exp\left( \sum_{y\in {\sf V}_\vartheta}
 \nu_\vartheta (y) \log n(y) \right) \nonumber \\[.2cm] & \leq & \max_{\vartheta \in \mathit{\Theta}_N (x)}
  \exp\left(\sum_{y\in {\sf V}_\vartheta} n(y) \log n(y) \right), \nonumber
\end{eqnarray*}
where we have used claim (ii) of Lemma \ref{pathpn}.
 $\square$ \vskip.1cm \noindent
Similarly, one proves that
\begin{equation}
  \label{13}
\left\vert \mathit{\Sigma}_N (x) \right\vert \leq \max_{\vartheta
\in \mathit{\Sigma}_N (x)} \exp\left( \sum_{y\in {\sf V}_\vartheta}
\log n(y)\right).
\end{equation}

\subsection{Graphs with tempered growth of vertex degree}

For an increasing function $g: \mathbb{N} \rightarrow (0, +\infty)$
 and an animal ${\sf A}$, we set
\begin{equation*}
%\label{11z}
G({\sf A}; g) = \frac{1}{|{\sf V}({\sf A})|} \sum_{x \in
{\sf V}({\sf A})} g\left(n(x) \right).
\end{equation*}
If $n_{\sf G}< \infty$, see (\ref{N}), then $G({\sf A}; g) \leq
g(n_{\sf G})$ for any animal and any function $g$. We say that the
vertex degree in ${\sf G}$ is of {\it tempered} growth if
\begin{equation}
  \label{110}
  \max_{{\sf A} \in
\mathcal{A}_{N}(x)}G({\sf A};g) \leq \gamma.
\end{equation}
More precisely, we mean the following.
\begin{definition}
  \label{goodadf}
The graph ${\sf G}$ is said to be $g$-tempered (resp. strongly
$g$-tempered) if there exists a number $\gamma >0$ such that, for
every $x \in {\sf V}$, there exists a strictly increasing sequence
$\{N_k\}_{k\in \mathbb{N}} \subset \mathbb{N}$ (resp. there exists
$N_x \in \mathbb{N}$) such (\ref{110})  holds for all $N=N_k$, $k\in
\mathbb{N}$ (resp. for all $N\geq N_x$).
\end{definition}
\begin{lemma}
  \label{gpn2}
For $g(t) = t \log t$, $t\in \mathbb{N}$, let ${\sf G}$ be
$g$-tempered. Then there exists $q_{\sf G}
>1$ such that, for any $x\in {\sf V}$, there exists a
strictly increasing sequence $\{N_k\}_{k\in \mathbb{N}}\subset
\mathbb{N}$ such that the estimate
\begin{equation}
  \label{g3}
\left\vert \mathcal{A}_{N} (x) \right\vert \leq q_{\sf G}^{N}
\end{equation}
holds for all $N= N_k$, $k\in \mathbb{N}$. If ${\sf G}$ is strongly
$g$-tempered, then for any $x\in {\sf V}$, there exists $N_x \in
\mathbb{N}$ such that (\ref{g3}) holds for all $N\geq
N_x$.
\end{lemma}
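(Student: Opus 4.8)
The plan is to reduce the enumeration of $\mathcal{A}_N(x)$ to the enumeration of walks in the sets $\mathit{\Theta}_M(x)$, for which Lemma \ref{pathpn1} already furnishes an exponential bound governed by $\sum_{y\in{\sf V}_\vartheta} n(y)\log n(y)$, and then to feed in the temperedness hypothesis (\ref{110}) with $g(t)=t\log t$, which says precisely that this sum is at most $\gamma N$ on every animal of order $N=N_k$.

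First I would construct an injection ${\sf A}\mapsto\vartheta_{\sf A}$ from $\mathcal{A}_N(x)$ into $\bigcup_m \mathit{\Theta}_{2m}(x)$. Given an animal ${\sf A}$ with $m:=|{\sf E}({\sf A})|$ edges, replace each $\langle y,z\rangle\in{\sf E}({\sf A})$ by the two opposite arcs $(y,z)$ and $(z,y)$. Since ${\sf A}$ is connected and every vertex then has equal in- and out-degree $\deg_{\sf A}(y)$, the resulting directed multigraph admits an Eulerian circuit starting and ending at $x$; fixing a global ordering of ${\sf V}$ selects a canonical one, $\vartheta_{\sf A}$. This walk has length $2m$, it leaves each $y$ towards each neighbour $z$ exactly once when $\langle y,z\rangle\in{\sf E}({\sf A})$ and never otherwise, so $\vartheta_{\sf A}\in\mathit{\Theta}_{2m}(x)$ by the defining property of $\mathit{\Theta}$, while its set of traversed edges is exactly ${\sf E}({\sf A})$. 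As these edges recover ${\sf A}$ from the vertex sequence $\vartheta_{\sf A}$, the map is injective; hence $|\mathcal{A}_N(x)|\le\sum_m \#\{\vartheta\in\mathit{\Theta}_{2m}(x): |{\sf V}_\vartheta|=N\}$. The range of $m$ is controlled by temperedness: since $2m=\sum_{y\in{\sf V}({\sf A})}\deg_{\sf A}(y)\le\sum_{y\in{\sf V}({\sf A})} n(y)$, and $n\log n\ge(\log 2)\,n$ for $n\ge 2$ while degree-one vertices contribute at most $N$ in total, one gets $m\le \tfrac12\bigl(1+\gamma/\log 2\bigr)N=:M_N$, linear in $N$.

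Next I would bound, for each admissible $m$, the number of walks in $\mathit{\Theta}_{2m}(x)$ whose vertex set has size $N$. The essential point is that such a $\vartheta$ visits exactly the $N$ vertices of an order-$N$ animal, so running the counting argument of Lemma \ref{pathpn1} over these walks yields
\[
\#\{\vartheta\in\mathit{\Theta}_{2m}(x):|{\sf V}_\vartheta|=N\}\le \max_{\substack{\vartheta\in\mathit{\Theta}_{2m}(x)\\ |{\sf V}_\vartheta|=N}} \exp\Bigl(\sum_{y\in{\sf V}_\vartheta}\nu_\vartheta(y)\log n(y)\Bigr)\le \exp\Bigl(\max_{{\sf A}\in\mathcal{A}_N(x)}\sum_{y\in{\sf V}({\sf A})} n(y)\log n(y)\Bigr)\le e^{\gamma N},
\]
using $\nu_\vartheta(y)\le n(y)$ from Lemma \ref{pathpn}(ii) and then the bound (\ref{110}) on the order-$N$ vertex set ${\sf V}_\vartheta$. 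Summing over the at most $M_N$ values of $m$ gives $|\mathcal{A}_N(x)|\le M_N\,e^{\gamma N}$; the linear prefactor is absorbed by taking, say, $q_{\sf G}:=e^{\gamma+1}$ for $N$ large. In the $g$-tempered case (\ref{110}) holds along a sequence $\{N_k\}$, so, discarding finitely many small indices, (\ref{g3}) holds along that (still infinite) subsequence; in the strongly $g$-tempered case it holds for all $N\ge N_x$ after enlarging $N_x$ to swallow the prefactor. Since $\gamma$, and hence $q_{\sf G}$, comes from Definition \ref{goodadf} and is independent of $x$, the constant is uniform as required.

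The main obstacle is the constraint $|{\sf V}_\vartheta|=N$ in the display: Lemma \ref{pathpn1} as stated bounds $|\mathit{\Theta}_{2m}(x)|$ by a maximum over \emph{all} length-$2m$ walks, whose vertex sets may be as large as $\approx 2m\approx cN$ and to which the order-$N$ hypothesis does not apply. I must therefore carry the constraint $|{\sf V}_\vartheta|=N$ through the inductive counting of Lemma \ref{pathpn1} (the recursion on the first step splits according to whether the starting vertex is later revisited, which is where care is needed), so that the exponent is a sum over exactly the $N$ vertices of an order-$N$ animal. An equivalent and perhaps cleaner route, to which I would fall back if this bookkeeping becomes awkward, is to count directly: bound the number of rooted spanning trees of order $N$ at $x$ by $4^N\max_{\sf T}\prod_{y}\binom{n(y)}{c(y)}\le(4e^{\gamma})^N$ (a fixed plane-tree shape times the choice of each vertex's children among its neighbours, using $c(y)\le n(y)$ and $\sum_y c(y)=N-1$), and then account for the non-tree edges of an animal by the factor $2^{M_N}$; this reproduces (\ref{g3}) with $q_{\sf G}=4e^{\gamma}2^{(1+\gamma/\log 2)/2}$.
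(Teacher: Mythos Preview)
Your approach matches the paper's: inject $\mathcal{A}_N(x)$ into closed walks via Eulerian circuits in the doubled animal, then count by the method of Lemma~\ref{pathpn1} and invoke (\ref{110}). You are in fact more careful than the paper, which writes the loose inequality $|\mathcal{A}_N(x)|\le|\Theta_M(x)|$ for a single unspecified $M$ and passes silently from $\max_{\vartheta\in\Theta_M(x)}$ to $\max_{{\sf A}\in\mathcal{A}_N(x)}$.

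The obstacle you flag dissolves once you observe that the inductive bound behind Lemma~\ref{pathpn1} holds for \emph{any} subset $S$ of length-$M$ walks from $x$: one has $|S|\le\max_{\vartheta\in S}\prod_{k=0}^{M-1}n(x_k)$, since $\sum_{\vartheta}\prod_{k}n(x_k)^{-1}=1$ over all length-$M$ walks from $x$ (simple random walk), so the sum restricted to $S$ is at most $1$, whence $|S|\cdot\min_{\vartheta\in S}\prod_k n(x_k)^{-1}\le 1$. Taking $S=\{\vartheta\in\Theta_{2m}(x):|{\sf V}_\vartheta|=N\}$, every $\vartheta\in S$ has ${\sf G}_\vartheta\in\mathcal{A}_N(x)$, so the maximum is over an order-$N$ vertex set and (\ref{110}) applies directly; no case split on whether the start is revisited, and no spanning-tree fallback, are needed.
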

{\it Proof:} Let ${\sf A}$ be an animal such that $x\in {\sf V}({\sf
A})$. Consider the multi-graph $\widetilde{\sf A}$ which has the
same vertices as ${\sf A}$ but doubled edges. This means that the
edge set of $\widetilde{\sf A}$ consists of the pairs $e,
\tilde{e}$, both connecting the same vertices, such that $e \in {\sf
E}({\sf A})$. Then the graph $\widetilde{\sf A}$ is Eulerian, see,
e.g., \cite[page 51]{BM}, and hence there exists a path in
$\widetilde{\sf A}$, which originates and terminates at $x$, enters
each $y\in {\sf V}({\sf A})$, and traverses each edge of
$\widetilde{\sf A}$ exactly once. Therefore, ${\sf A} = {\sf
G}_\vartheta$ for some path $\vartheta (x,x)\in \mathit{\Theta}_M
(x)$ with $M= 2 |{\sf E} ({\sf A})|$. Thus, by (\ref{12}) we have
\begin{eqnarray*}
%  \label{g4}
|\mathcal{A}_{N} (x)| \leq  |\mathit{\Theta}_M(x)|& \leq &
\max_{\vartheta \in \mathit{\Theta}_M (x)} \exp\left( \sum_{y\in
{\sf
V}_\theta} g (n(y)) \right) \\[.2cm] & = & \max_{{\sf A} \in
\mathcal{A}_{N} (x)} \exp\left( \sum_{y\in {\sf
V}({\sf A})} g (n(y)) \right) \nonumber \\[.2cm] & \leq & \max_{{\sf A} \in
\mathcal{A}_{N} (x)} \exp\left( N G({\sf A}; g) \right).\nonumber
\end{eqnarray*}
Then we set $q_{\sf G} = e^\gamma$ and obtain (\ref{g3}) from
(\ref{110}).
  $\square$ \vskip.1cm \noindent
In the same way, by means of (\ref{13}) one proves the following
\begin{lemma}
  \label{gepn}
For $g(t) = \log t$, $t\in \mathbb{N}$, let ${\sf G}$ be
$g$-tempered. Then there exists $\bar{q}_{\sf G}
>1$ such that, for any $x\in {\sf V}$, there exists a
strictly increasing sequence $\{N_k\}_{k\in \mathbb{N}}\subset
\mathbb{N}$ such that the estimate
\begin{equation}
  \label{g5}
\left\vert \mathit{\Sigma}_{N} (x) \right\vert \leq \bar{q}_{\sf
G}^{N}
\end{equation}
holds for all $N= N_k$, $k\in \mathbb{N}$. If ${\sf G}$ is strongly
$g$-tempered, then for any $x\in {\sf V}$, there exists
$\overline{N}_x \in \mathbb{N}$ such that the estimate (\ref{g5})
holds for all $N\geq \overline{N}_x$.
\end{lemma}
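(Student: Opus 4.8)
The plan is to run exactly the argument of the proof of Lemma \ref{gpn2}, but starting directly from the estimate (\ref{13}) (which here plays the role that (\ref{12}) played there) and with the weight function $g(t)=\log t$. Since (\ref{13}) already bounds $|\mathit{\Sigma}_N(x)|$ by an exponential of a sum over the vertices of a simple path, no Eulerian-tour construction is needed; the only genuinely new point is a bookkeeping mismatch between the \emph{length} of a path and the \emph{number of vertices} of the animal it generates, which I will absorb into the constant.

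First I would fix $x\in{\sf V}$ and invoke Definition \ref{goodadf} for $g=\log$, obtaining the constant $\gamma>0$ together with the strictly increasing sequence $\{N_k\}_{k\in\mathbb{N}}$ for which $\max_{{\sf A}\in\mathcal{A}_{N_k}(x)}G({\sf A};g)\leq\gamma$. For a simple path $\vartheta\in\mathit{\Sigma}_N(x)$ the generated graph ${\sf G}_\vartheta$ is an animal containing $x$, whose vertex set satisfies $|{\sf V}_\vartheta|\leq N+1$ (the vertices $x_0,\dots,x_{N-1}$ are distinct, and $x_N$ either is new or repeats an earlier one). Hence (\ref{13}) reads
\[
|\mathit{\Sigma}_N(x)|\ \leq\ \max_{\vartheta\in\mathit{\Sigma}_N(x)}\exp\Big(\sum_{y\in{\sf V}_\vartheta}\log n(y)\Big)\ =\ \max_{\vartheta\in\mathit{\Sigma}_N(x)}\exp\big(|{\sf V}_\vartheta|\,G({\sf G}_\vartheta;g)\big).
\]

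Next I would take path lengths along the shifted sequence $N=N_k-1$, so that $|{\sf V}_\vartheta|\leq N_k$. If $|{\sf V}_\vartheta|=N_k$ then ${\sf G}_\vartheta\in\mathcal{A}_{N_k}(x)$ and the tempered bound applies directly. If $|{\sf V}_\vartheta|<N_k$, I would enlarge ${\sf G}_\vartheta$ to an animal ${\sf A}\in\mathcal{A}_{N_k}(x)$ by adjoining adjacent vertices; since $\log n(y)\geq\log 1=0$, this only increases the exponent, so in either case
\[
\sum_{y\in{\sf V}_\vartheta}\log n(y)\ \leq\ \sum_{y\in{\sf V}({\sf A})}\log n(y)\ =\ N_k\,G({\sf A};g)\ \leq\ N_k\gamma\ =\ (N+1)\gamma .
\]
Thus $|\mathit{\Sigma}_N(x)|\leq e^{(N+1)\gamma}\leq (e^{2\gamma})^N$ for $N\geq1$, and one sets $\bar q_{\sf G}=e^{2\gamma}>1$; the (eventually positive) sequence $\{N_k-1\}$ is the required strictly increasing sequence of path lengths. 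The strongly $g$-tempered case is identical: the tempered inequality then holds for every vertex count at least $N_x$, so (\ref{g5}) follows for all path lengths $N\geq\overline{N}_x:=N_x-1$.

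I expect the only point needing care to be the padding step, i.e. ensuring that an animal with fewer than $N_k$ vertices can be completed to one in $\mathcal{A}_{N_k}(x)$. This is automatic once the connected component of $x$ has more than $N_k$ vertices, which holds for all large $k$ whenever $\mathit{\Sigma}_{N_k-1}(x)\neq\varnothing$; in the contrary case that component is finite and $\mathit{\Sigma}_N(x)=\varnothing$ for all large $N$, so (\ref{g5}) is trivial there. Everything else is a verbatim transcription of Lemma \ref{gpn2} with $g=\log t$ in place of $g=t\log t$.
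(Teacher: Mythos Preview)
Your proof is correct and follows the same route the paper indicates (``in the same way, by means of (\ref{13})''): bound $|\mathit{\Sigma}_N(x)|$ by the exponential of $\sum_{y\in{\sf V}_\vartheta}\log n(y)$ and then invoke the $g$-tempered condition with $g=\log$. The shift $N=N_k-1$ and the padding step you introduce are a legitimate way to reconcile path length with vertex count, but they can be avoided: in the derivation of (\ref{13}) only the factors $n(x_0)\cdots n(x_{N-1})$ occur, and for a simple path these $N$ vertices are distinct and span an animal in $\mathcal{A}_N(x)$, so one may take $\bar q_{\sf G}=e^{\gamma}$ and use the sequence $\{N_k\}$ itself without any shift or enlargement.
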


\subsection{Capacity of animals}

For an animal, ${\sf A} \subset {\sf G}$, by $\rho_{\sf A}(x,y) $, we denote the length of the shortest path
$\vartheta (x,y)$ in ${\sf A}$, i.e., such that ${\sf G}_\vartheta \subset {\sf A}$.
We shall be interested in estimating the
number of vertices in subsets ${\sf B} \subset {\sf V}({\sf A})$,
which have the following property.
\begin{definition}
  \label{an1df}
Given $\lambda >1$, a set, ${\sf B}\subset {\sf V}({\sf A})$, is
said to be $\lambda$-admissible in ${\sf A}$ if $\rho_{\sf A}(x, y)
\geq \lambda$ for any distinct $x, y \in {\sf B}$. The quantity
\begin{equation*}
 %\label{cap}
C({\sf A};\lambda) =  \max\{|{\sf B}|: {\sf B} \   {\rm is} \ \lambda-{\rm admissible}  \ {\rm in}  \ {\sf A}\}
\end{equation*}
is called the $\lambda$-capacity of ${\sf A}$.
\end{definition}
Hence, if ${\sf A}' \subset {\sf A}$ is a connected spanning subgraph, then
\begin{equation}
 \label{span}
C({\sf A};\lambda) \leq C({\sf A}';\lambda).
\end{equation}
If $\vartheta$ is a simple path of length $N$, then
\begin{equation}
  \label{14}
C({\sf G}_\vartheta; \lambda) \leq
 1 + N/\lambda.
 \end{equation}

\begin{lemma}
  \label{anpn}
Let ${\sf A}$ be an animal of size $N$. Then, for any $\lambda>0$,
\begin{equation}
  \label{15}
C({\sf A};\lambda)  \leq \max\left\{ 1; 2N/\lambda\right\}.
\end{equation}
\end{lemma}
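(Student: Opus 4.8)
The plan is to reduce the estimate to a counting argument along a single closed walk, using the two facts recorded just before the statement. First I would dispose of the trivial case: if the largest $\lambda$-admissible set ${\sf B}$ in ${\sf A}$ has at most one element, then $C({\sf A};\lambda)=1\le \max\{1;2N/\lambda\}$ and there is nothing to prove. So from here on I assume that ${\sf B}=\{b_1,\dots,b_m\}$ realizes the capacity, with $m=C({\sf A};\lambda)\ge 2$.

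Next I would invoke the monotonicity (\ref{span}). I pick any spanning tree ${\sf T}$ of ${\sf A}$; it is a connected spanning subgraph with exactly $N-1$ edges, so by (\ref{span}) it suffices to bound $C({\sf T};\lambda)$. Since deleting edges can only increase path distances, ${\sf B}$ remains $\lambda$-admissible in ${\sf T}$, i.e. $\rho_{\sf T}(b_i,b_j)\ge \lambda$ for $i\ne j$. The whole point of passing to a spanning tree is to control the edge count: it forces the Eulerian circuit built below to have length exactly $2(N-1)$ rather than $2|{\sf E}({\sf A})|$.

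Then I would repeat the Eulerian construction already used in the proof of Lemma \ref{gpn2}: doubling every edge of ${\sf T}$ yields an Eulerian multigraph, hence a closed walk $\vartheta=(w_0,w_1,\dots,w_{2(N-1)})$ with $w_0=w_{2(N-1)}$ that traverses each doubled edge once, visits every vertex of ${\sf T}$, and has length $2(N-1)$. For each $b_i$ I fix the position $p_i$ of its first occurrence along $\vartheta$ and, after relabelling, assume $0\le p_1<p_2<\cdots<p_m\le 2(N-1)-1$. These $m$ marked positions split the circuit cyclically into $m$ arcs, of lengths $p_2-p_1,\dots,p_m-p_{m-1}$ and $2(N-1)-p_m+p_1$, whose sum is exactly $2(N-1)$. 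Each arc is a walk in ${\sf T}$ joining two distinct vertices of ${\sf B}$, so its edge-length dominates the corresponding tree distance and is therefore at least $\lambda$. Summing the $m$ arc lengths gives $m\lambda\le 2(N-1)<2N$, whence $C({\sf A};\lambda)\le C({\sf T};\lambda)=m\le 2N/\lambda$, which together with the trivial case yields (\ref{15}). This cyclic counting is precisely the closed-walk analogue of the estimate (\ref{14}) for a single simple path, the only change being that the absence of endpoints removes the additive $+1$.

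I expect the one delicate point to be the arc-length inequality: one must check that the portion of $\vartheta$ between two consecutive marked positions is genuinely a walk in ${\sf T}$ from $b_i$ to $b_{i+1}$ (with $b_{m+1}:=b_1$), so that its length is at least $\rho_{\sf T}(b_i,b_{i+1})\ge \lambda$, and that the cyclic arcs partition all $2(N-1)$ steps of the circuit without overlap. Everything else—the reduction through (\ref{span}) and the Eulerian circuit—is routine and already appears in the paper.
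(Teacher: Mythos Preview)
Your argument is correct and is genuinely different from---and much shorter than---the paper's own proof. Both begin by passing to a spanning tree via (\ref{span}), but from that point on the strategies diverge. The paper proceeds by induction on $N$: it singles out the backbone (the diameter path) of the spanning tree, splits the tree into the backbone ${\sf T}_0$ and subtrees ${\sf T}_1,\dots,{\sf T}_r$ hanging off it, separates the subtrees according to whether $|{\sf V}({\sf T}_s)|\ge\lambda/2$, applies the inductive hypothesis to the large ones, and then carries out a rather intricate interval-overlap argument on the backbone to count the contributions of the small ones. You bypass all of this by reusing the Eulerian-circuit trick from Lemma~\ref{gpn2}: the doubled tree has a closed walk of length $2(N-1)$, the $m$ first occurrences of the $b_i$ cut this walk cyclically into $m$ arcs whose lengths sum to $2(N-1)$, and each arc, being a walk in ${\sf T}$ between two distinct points of ${\sf B}$, has length at least $\lambda$. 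The inequality $m\lambda\le 2(N-1)<2N$ falls out immediately. Your route avoids both the induction and the backbone decomposition; the paper's approach gives more structural information about how ${\sf B}$ sits inside the tree, but for the bare bound (\ref{15}) your method is clearly preferable.

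One cosmetic point: in your final chain ``$C({\sf A};\lambda)\le C({\sf T};\lambda)=m$'' the equality is not justified, since you chose ${\sf B}$ to realize $C({\sf A};\lambda)$, not $C({\sf T};\lambda)$. Either write $C({\sf A};\lambda)=m\le 2N/\lambda$ directly (which is what your argument proves), or run the Euler-circuit count with a maximal $\lambda$-admissible set in ${\sf T}$ to obtain $C({\sf T};\lambda)\le 2N/\lambda$ and then invoke (\ref{span}). This is purely expository; the mathematics is sound.
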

{\it Proof:} Suppose first that $N\leq \lambda$. As any simple path
in ${\sf A}$ cannot be longer than $N-1$, one has $\rho (x,y) \leq
N-1$ for any $x, y \in {\sf V}({\sf A})$. Thus, any
$\lambda$-admissible set can contain at most one element, and hence
(\ref{15}) holds. For $N>\lambda$, we use the induction in $N$
assuming that (\ref{15}) holds for all $N' < N$.

Let ${\sf T} \subset {\sf A}$ be a spanning tree for ${\sf A}$. We
are going to estimate its capacity and then to use (\ref{span}). Let
${\sf B}\subset {\sf V}({\sf T})$ be such that
\begin{equation*}
%  \label{16}
 \min_{x, y \in {\sf B}, \ x\neq y} \rho_{\sf T} (x, y) \geq \lambda.
\end{equation*}
To estimate $|{\sf B}|$, we pick $z, z'\in {\sf V}({\sf A})$ such
that the simple path $\vartheta (z,z')$ in ${\sf T}$ is the longest
one among such paths (backbone); that is, $\rho_{\sf T} (z, z')$ is the
diameter of ${\sf T}$, see Fig. \ref{DrzewkaT}. Let ${\sf T}_0 \subset {\sf T}$ be the graph
generated by this path. Then we split
\begin{equation*}
%  \label{17}
 {\sf E}({\sf T}) = {\sf E}({\sf T}_0) \cup {\sf E}' \cup {\sf E}'',
\end{equation*}
where ${\sf E}'$ (resp. ${\sf E}''$) consists of those edges of
${\sf T}$ which have exactly one endpoint (resp. no endpoints) in
${\sf V}({\sf T}_0)$. Note that ${\sf E}'$ and ${\sf E}''$ may be
void. Then the graph $({\sf V}({\sf T}), {\sf E}({\sf T}_0) \cup
{\sf E}'')$ is disconnected and falls into $r+1$ connected
components ${\sf T}_0, {\sf T}_1 , \dots , {\sf T}_r$, $r\geq 0$. As
${\sf T}$ is a tree, $r = |{\sf E}'|$; that is, ${\sf E}' = \{
\langle z_1 , z'_1 \rangle, \dots \langle z_r , z'_r \rangle\}$ with
$z_s \in {\sf V}({\sf T}_0)$, $s=1, \dots , r$. We call $z_s$ the
{\it root} of ${\sf T}_s$ in ${\sf T}_0$. As $\rho_{\sf T} (z, z')$
is the diameter of ${\sf T}$, both $z$ and $z'$ cannot be among the
roots. Note also that some of the trees ${\sf T}_s$ may have common
roots.

\begin{figure}[h]
\centerline{\includegraphics[width=0.6\textwidth]{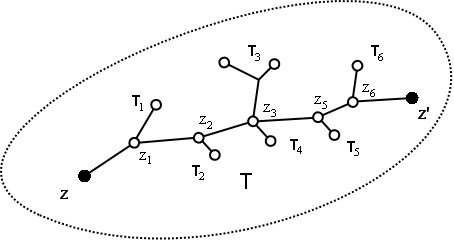}}
\caption{Trees ${\sf T}, {\sf T}_0,{\sf T}_1, \ldots, {\sf
T}_6$.}\label{DrzewkaT}
\end{figure}

Set $N_s = |{\sf V}({\sf T}_s)|$, $s= 0, 1 , \dots , r$. As
$\rho_{\sf T} (z, z')$ is the diameter of ${\sf T}$, we have $N_s
\leq N_0$ for all $s= 1 , \dots , r$. If $N_0 < \lambda$, the
diameter of ${\sf A}$ is less than $\lambda$ and hence $|{\sf
B}|\leq 1$, which yields (\ref{15}). Thus, we assume in the sequel
that $N_0 \geq \lambda$. If $r=0$, then $N_0= N$ and ${\sf B}\subset
{\sf V}({\sf T}_0)$. In this case, by (\ref{14}) we have
\begin{equation*}
  %\label{18}
  |{\sf B}| \leq 1 + (N-1)/\lambda < 2 N /\lambda.
\end{equation*}
For $N_0 < N$, we number the trees $T_s$ in such a way that, for
some $k \in \{0, 1,  \dots r\}$,  $N_s \geq \lambda /2$ for $s= 0 ,
1 , \dots , k$, and $N_s < \lambda /2$ for $s=k+1, \dots , r$. Then
by the inductive assumption, we have
\begin{equation}
  \label{19}
  \left\vert
  {\sf B} \cap {\sf V}({\sf T}_s) \right\vert \leq 2 N_s /\lambda,
   \quad s= 0, \dots , k.
\end{equation}
Since $N = \sum_{s=0}^r N_s$, for $k=r$, we have from (\ref{19})
that $|{\sf B}| \leq 2N/\lambda$ and hence (\ref{15}) holds. Suppose
now that $k< r$. Then for any $y\in {\sf V}({\sf T}_s)$ with $s= k+1
, \dots , r$, we have that $\rho_{\sf T} (y , z_s) \leq N_s
<\lambda/2$. Therefore, each such a tree can contain at most one
element of ${\sf B}$, and the trees with the same root can contain
at most one such element in common. If none of them contain elements
of ${\sf B}$, we have
\[ \left\vert  {\sf B}
\right\vert = \sum_{s=0}^k  \left\vert  {\sf B} \cap {\sf V}({\sf
T}_s) \right\vert \leq \frac{2}{\lambda} \sum_{s=0}^k  N_s \leq 2N
/\lambda,
\]
which again yields (\ref{15}). Let us forget about those trees which
do not contain elements of ${\sf B}$ and suppose that, for some $n
\in \{k+1 , \dots , r\}$, each of ${\sf T}_{k+1}, \dots ,{\sf T}_n$
contains a single element  $\tilde{y}_s \in {\sf B}$, $s = k+1 ,
\dots , n$. Then all  their roots $z_{k+1} , \dots , z_n$ are
distinct, and, for all such $s$,
\begin{equation}
  \label{y}
\rho_{\sf T} (\tilde{y}_s , z_s) \leq N_s .
\end{equation}
The total number of $\tilde{y}_s$'s is $n-k$. Let us now estimate
the maximum possible number of elements of ${\sf B}$ in the tree
${\sf T}_0$. To this end we consider
\begin{equation}
  \label{20}
{\sf D}_s = \{ y \in {\sf V}({\sf T}_0) : \rho_{\sf T} (y, z_s) <
\lambda - N_s  \}, \quad  s = k+1 , \dots, n.
\end{equation}
Each ${\sf D}_s$ is in  fact an interval of the path $\vartheta(z,
z')$, centered at $z_s$. As for any $y \in {\sf D}_s$, we have
$\rho_{\sf T} (y, \tilde{y}_s) < \lambda$; hence, none of ${\sf
D}_s$ can contain elements of ${\sf B}$.  Some of ${\sf D}_s$ can
overlap, which reduces the part of ${\sf T}_0$ free of elements of
${\sf B}$. If ${\sf D}_s$ and ${\sf D}_{s'}$ overlap, then
\begin{equation}
  \label{21}
\lambda \leq \rho_{\sf T} (\tilde{y}_{s},\tilde{y}_{s'}) \leq  N_{s}
+ N_{s'}  + \rho_{\sf T} (z_{s},z_{s'})
\end{equation}
which gives the lower bound for $\rho_{\sf T} (z_{s},z_{s'})$.

Suppose now that the roots $z_{k+1} , \dots , z_n$, and hence the
corresponding intervals (\ref{20}), are distributed among $q$
groups, $q \geq 1$, consisting of $l_1, \dots , l_q$ elements, $l_1
+ \cdots + l_q = n-k$. We also suppose that consecutive intervals in
each group overlap (if the corresponding $l_j>1$), whereas the
intervals belonging to distinct groups do not overlap. The roots are
numbered in such a way that, for $j=0, 1, \dots , q-1$, the $j$-th
group is
\[
Z_j =  \{ z_{t_j + 1} , \dots , z_{t_{j + 1}}\}, \quad t_j =  k+l_1 + \cdots + l_j, \ \
  \ \ l_0 =0.
\]
For such a group, let $y^*_j$ (resp. $z^*_{j+1}$) be the closest to
$z_{t_{j }+ 1}$ (resp. to $z_{t_{j + 1}}$) element of ${\sf B}\cap
{\sf V}({\sf T}_0)$. Then $\rho_{\sf T} (y^*_j,z_{t_{j }+ 1}) +
\rho_{\sf T} (\tilde{y}_{t_{j }+ 1},z_{t_{j }+ 1}) \geq \lambda$ and
$\rho_{\sf T} (z^*_{j+1},z_{t_{j + 1}}) + \rho_{\sf T}
(\tilde{y}_{t_{j +1}+ 1},z_{t_{j +1}}) \geq \lambda$, which yields,
see (\ref{y}),
\begin{equation*}
 % \label{22y}
\rho_{\sf T} (y^*_j,z_{t_{j }+ 1}) \geq \lambda - N_{t_j +1}, \quad
\rho_{\sf T} (z^*_{j+1},z_{t_{j + 1}}) \geq \lambda - N_{t_{j +1}}.
\end{equation*}
In what follows, the elements of ${\sf B}\cap {\sf V}({\sf T}_0)$
are contained in the paths $\vartheta (z, y^*_0)$, $\vartheta (
y^*_j, z^*_j)$, $j= 1 , \dots , q-1$, and $\vartheta (z^*_q, z')$.
The number of elements of ${\sf B}$ in each such a path can be
estimated by (\ref{14}); thus, we have to estimate the total length
of such paths. The latter quantity is equal to the length of
$\vartheta ( z,z')$ minus the total length of the intervals
(\ref{20}); that is,
\begin{eqnarray} \label{23y}
L & \stackrel{\rm def}{=} &  |\vartheta (z, y^*_0)| + |\vartheta (z^*_q, z')|
 + \sum_{j=1}^{q-1}|\vartheta ( y^*_j, z^*_j)| \\[.1cm] & \leq & N_0-1 -
  \sum_{j=0}^{q-1}|\vartheta ( z_{t_j +1}, z_{t_{j+1}})|. \nonumber
\end{eqnarray}
The latter summand can be estimated by means of (\ref{21}), which,
for $j= 0, 1 , \dots , q-1$, yields
\begin{equation*}
 % \label{24y}
|\vartheta ( z_{t_j +1}, z_{t_{j+1}})| \geq \lambda (l_{j+1} - 1) -
\sum_{s=t_j + 1}^{t_{j+1} -1} (N_s + N_{s+1}).
\end{equation*}
Applying this estimate in (\ref{23y}) and taking into account that the
total number of elements of ${\sf B}$ in ${\sf T}_1 , \dots , {\sf
T}_k$ was estimated in (\ref{19}), we arrive at
\begin{eqnarray*}
|{\sf B} | & \leq & q + 1 + L/\lambda + (n-k) + \frac{2}{\lambda}\sum_{s=1}^k N_s \\[.2cm]
& \leq & q + 1 + (n-k) + \frac{2}{\lambda}\sum_{s=1}^k N_s + (N_0 -1)/\lambda - (n-k) - q \\[.2cm]
& - & \frac{1}{\lambda} \sum_{j=0}^{q-1} (N_{t_j+1} + N_{t_{j+1}}) + \frac{2}{\lambda}\sum_{s=k+1}^n N_s \\[.2cm]
& = & \frac{2}{\lambda}\sum_{s=0}^n N_s + 1 -
\frac{1}{\lambda}\left( N_0+ \sum_{j=0}^{q-1} (N_{t_j+1} + N_{t_{j+1}})\right)\\[.2cm]
 & \leq & \frac{2}{\lambda}\sum_{s=0}^n N_s \leq 2 N /\lambda,
\end{eqnarray*}
where we have taken into account that $N_0 \geq \lambda$ and $N =
N_0 + \cdots + N_n + \cdots + N_r$. $\square$ \vskip.1cm It is
worthwhile to note that the estimate in (\ref{15}) is optimal, that
is, for each $\varepsilon>0$, one can pick ${\sf A}$ of size $N$ and
$\lambda>0$ such that $C({\sf A}, \lambda) > 2N/\lambda -
\varepsilon$. An instance can be ${\sf G}_\vartheta$, $\vartheta$
being a simple path, cf. (\ref{14}). In this case, $$C({\sf
G}_\vartheta, |\vartheta|) =2 > \frac{2(|\vartheta| +
1)}{|\vartheta|} - \varepsilon, $$ for sufficiently big
$|\vartheta|$.

\subsection{Balls in repulsive graphs}
\label{repgr}

We recall that ${\sf B}_N (x)=\{y\in {\sf V}: \rho(x,y) \leq N\}$,
$N\in \mathbb{N}$, denotes the ball in ${\sf G} = ({\sf V}, {\sf
E})$ of radius $N$ centered at $x$, cf. (\ref{U2}) and Corollary
\ref{1co}. Further properties of such sets are described in the
following statement.
\begin{lemma}
 \label{Qpn}
Let ${\sf G}$ be in $\mathbb{G}_{-} (\phi)$. Then for each $x\in
{\sf V}$, there exists a strictly increasing sequence $\{N_k\}_{k\in
\mathbb{N}} \subset \mathbb{N}$, such that, for all $k\in \mathbb{N}$,
\begin{equation}
 \label{10qq}
\max_{y\in {\sf B}_{N_k} ( x) } n(y) \leq \phi^{-1} (2 N_k +1).
\end{equation}
If ${\sf G}\in\mathbb{G}_{+} (\phi)$, then, for every $x\in {\sf V}$, there
exists $N_x \in \mathbb{N}$ such that the estimate
\begin{equation}
 \label{10qu}
\max_{y\in {\sf B}_N (x) } n(y) \leq \phi^{-1} (2 N)
\end{equation}
 holds for all $N\geq N_x$.
\end{lemma}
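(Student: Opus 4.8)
The plan is to track how the largest degree occurring inside a ball grows with its radius, and to isolate radii at which this growth has not yet ``outrun'' $\phi$. Since every vertex has finite degree, each ball ${\sf B}_N(x)$ is finite, so $D(N) := \max_{y\in {\sf B}_N(x)} n(y)$ is a well-defined, nondecreasing, integer-valued step function of $N$. If $n_{\sf G}<\infty$ then $D$ is bounded while $\phi^{-1}(2N)\to\infty$, so both assertions hold trivially for all large $N$; hence I would assume $n_{\sf G}=\infty$, so that $D(N)\to\infty$. Let $R_1<R_2<\cdots$ be the jump points of $D$ and, for each $i$, pick a vertex $y_i$ realizing the fresh maximum; because $D$ actually increases at $R_i$, this realizer sits at distance \emph{exactly} $R_i$, so $r_i := \rho(x,y_i) = R_i$ and $d_i := n(y_i) = D(R_i)$, with $d_1<d_2<\cdots\to\infty$. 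From some index on, $d_i>n_*$, so all these $y_i$ are ``heavy'' and the repulsion condition (\ref{7}) applies to any pair.

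The heart of the matter is to compare two \emph{consecutive} realizers $y_i$ and $y_{i+1}$. On one hand, the triangle inequality gives $\rho(y_i,y_{i+1})\leq r_i+r_{i+1}=R_i+R_{i+1}\leq 2R_{i+1}-1$, using $R_i\leq R_{i+1}-1$. On the other hand, since $d_i<d_{i+1}$ we have $m_-(y_i,y_{i+1})=d_i$ and $m_+(y_i,y_{i+1})=d_{i+1}$, so (\ref{7}) yields $\rho(y_i,y_{i+1})\geq\phi(d_i)$ when ${\sf G}\in\mathbb{G}_-(\phi)$, and the stronger $\rho(y_i,y_{i+1})\geq\phi(d_{i+1})$ when ${\sf G}\in\mathbb{G}_+(\phi)$. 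It is precisely this min-versus-max distinction that produces the two different conclusions.

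For the $\mathbb{G}_-$ case, combining the bounds gives $\phi(d_i)\leq 2R_{i+1}-1 = 2(R_{i+1}-1)+1$. Setting $N_k:=R_{i+1}-1$ over large $i$ yields a strictly increasing sequence on which $D(N_k)=d_i$ and $\phi(D(N_k))\leq 2N_k+1$; applying $\phi^{-1}$ gives (\ref{10qq}). For the $\mathbb{G}_+$ case, the stronger estimate gives $\phi(d_{i+1})\leq R_i+R_{i+1}\leq 2R_{i+1}$, so for every $N$ with $R_{i+1}\leq N<R_{i+2}$ (on which $D(N)=d_{i+1}$) one has $\phi(D(N))\leq 2R_{i+1}\leq 2N$, which is (\ref{10qu}). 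Taking $N_x:=R_{i_0+1}$, with $i_0$ any index past which all realizers are heavy, makes this valid for all $N\geq N_x$.

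The main obstacle is not analytic but organizational: setting up $D(N)$ and its realizers correctly, namely observing that the realizer of a fresh maximum lies at distance exactly equal to the jump radius, and that consecutive realizers have strictly increasing degrees, so that the $\min$ and $\max$ in (\ref{7}) fall on the \emph{earlier} and \emph{later} vertex respectively. Once this bookkeeping is in place the two estimates are immediate. The capacity bound of Lemma \ref{anpn} offers an alternative route, by controlling the number of heavy vertices in a ball, but the two-vertex distance comparison above appears to deliver both statements directly and even to pin down the constants ($2N_k+1$ versus $2N$).
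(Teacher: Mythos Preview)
Your proof is correct and follows essentially the same approach as the paper: both arguments construct a sequence of vertices with strictly increasing degrees moving outward from $x$, bound the distance between consecutive members by the triangle inequality through $x$, and then invoke the repulsion condition~(\ref{7}) to compare this distance with $\phi$ of the appropriate degree. Your packaging via the step function $D(N)$ and its jump radii $R_i$ is somewhat tidier than the paper's direct construction of the $x_k$'s---in particular, your treatment of the $\mathbb{G}_+$ case avoids the case split on whether $\rho(x,\tilde{x})\gtrless \phi(n(\tilde{x}))/2$ that the paper uses---but the underlying mechanism is identical.
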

{\it Proof:} First we consider the case of ${\sf G}\in
\mathbb{G}_{-}(\phi)$. Let $x_1 \in{\sf V}_*^c $ be the closest
vertex to $x$ such that $n(x_1)  > n({x})$. If there are several
such vertices, we take the one with the biggest degree. In the same
way, we  pick $x_2$, being the closest vertex to $x$ such that
$n(x_2)
> n(x_1)$. Then we set $N_1 = \rho(x, x_2) - 1$, which yields
$n(x_1) = \max_{y\in {\sf B}_{N_1}( x) } n(y)$. By (\ref{7}) $\rho
(x_1 , x_2) \geq \phi(n(x_1))$; hence, $2N_1 + 1 \geq  \rho (x ,
x_2) + \rho (x , x_1) \geq \phi(n(x_1))$. Thus, (\ref{10qq}) holds
for $N=N_1$. Next we take $x_3$ such that $n(x_3)
> n(x_2)$ and set $N_2 = \rho (x, x_3) - 1$. In this way, we
construct the whole sequence $\{N_k\}_{k\in \mathbb{N}}$ for which
(\ref{10qq}) holds.

Let now ${\sf G}$ be in $\mathbb{G}_{+}(\phi)$. Given $x$, let
$\tilde{x}\in {\sf V}_*^c$ be the closest vertex  to $x$, see
(\ref{NU}). If there are several such vertices, we take the one with
the biggest degree. Consider the following cases: (i) $\rho(x ,
\tilde{x})
> \phi (n(\tilde{x}))/2$; (ii) $\rho(x , \tilde{x}) \leq \phi
(n(\tilde{x}))/2$. The latter one includes the case $\tilde{x} = x$,
i.e., $x$ itself is  in ${\sf V}^c_*$. In case (i), we set $N_x$ to
be the smallest integer number such that $N_x > \phi(n_*) / 2$. Then
for $N\geq N_x$, we have the following possibilities: (a) $N <
\rho(x , \tilde{x})$; (b) $N \geq \rho(x , \tilde{x})$. If (a)
holds, then the ball ${\sf B}_{N} ( x)$ contains only elements of
${\sf V}_*$ and hence (\ref{10qu}) holds true. In case (b), we have
: (c) $\max_{y\in {\sf B}_{N} (x) } n(y) = n(\tilde{x})$; (d) there
exists $z\neq \tilde{x}$ such that $\max_{y\in {\sf B}_N (x) } n(y)
= n(z)$. If (c) holds, we again obtain (\ref{10qu})  since $N \geq
\rho(x, \tilde{x})> \phi (n(\tilde{x}))/2$, by (i). If (d) holds, by
(\ref{7}) we have $\rho(z, \tilde{x}) \geq \phi( n(z))$. On the
other hand, by the triangle inequality $\rho(z, \tilde{x}) \leq
\rho(x, \tilde{x}) + \rho(z, {x}) \leq 2N$, which again yields
(\ref{10qu}). If (ii) holds, let $x_1$ be the closest vertex to $x$
such that $n(x_1)
> n(\tilde{x})$, again we take  the one with the maximum degree
among such vertices. Then we set $N_x = \rho (x, x_1)$. For $N\geq
N_x$, let $z$ be such that $n(z) = \max_{y \in {\sf B}_N (x) }
n(y)$. Then by (\ref{7}) $\rho (\tilde{x}, z) \geq \phi (n(z))$. By
the triangle inequality this yields
\[
N \geq \rho (x, z) \geq \rho (\tilde{x}, z) - \rho (x , \tilde{x})
\geq \phi (n(z)) - \phi (n(\tilde{x})) /2 \geq \phi (n(z))/2,
\]
which yields (\ref{10qu}) and hence completes the proof. $\square$

%\subsection {Temperedness of repulsive graphs}

\section{The proof of Theorems 2 and 3 and Propositions 7 and 9}
\label{4S}
%\subsection{Proof of }

The proof of the statements in question relies upon showing that the graphs ${\sf
G}\in \mathbb{G}_{-}(\phi)$ (resp. ${\sf G}\in
\mathbb{G}_{+}(\phi)$) are $g$-tempered (resp. strongly
$g$-tempered) if $g$ and $\phi$ satisfy a certain condition. Then we
apply Lemmas \ref{gpn2} and \ref{gepn} and obtain the result we want. To realize this we introduce one more notion. Set
\[
n_{\sf A} = \max_{x\in {\sf V}( {\sf
A})} n(x).
\]
\begin{definition}
 \label{gbdf}
Given ${\sf G}\in \mathbb{G}_{\pm} (\phi)$, an ${\sf A}\subset {\sf G}$ is said to be a good
animal if
\begin{equation}
 \label{14z}
\left\vert {\sf V}( {\sf A})\right\vert \geq \phi(n_{\sf A})/2.
\end{equation}
\end{definition}
By $\mathcal{A}_{\rm good}$ we denote the set of all good animals,
cf. (\ref{10qq}) and (\ref{10qu}).
\begin{lemma}
 \label{goodpn1}
Let the functions $g$ and $\phi$ be such that the following holds
\begin{equation}
 \label{13z}
\sum_{k=1}^\infty \frac{g(t_{k+1})}{\phi(t_k)} < \infty,
\end{equation}
for some strictly increasing sequence $\{t_k\}_{k\in \mathbb{N}}
\subset \mathbb{N}$. Then any  ${\sf G}\in \mathbb{G}_{-} (\phi)$
(resp. any ${\sf G}\in \mathbb{G}_{+} (\phi)$) is  $g$-tempered
(resp. is strongly $g$-tempered).
\end{lemma}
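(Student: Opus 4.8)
The plan is to first establish a single uniform bound valid for every \emph{good} animal, and only afterwards to invoke Lemma~\ref{Qpn} to guarantee that, for the appropriate values of $N$, every animal in $\mathcal{A}_N(x)$ is in fact good. I would begin by fixing the threshold $n_*$ from Definition~\ref{2df} and choosing $k_*$ to be the smallest index with $t_{k_*}\geq n_*$; both $k_*$ and the constant $S:=\sum_{k=1}^\infty g(t_{k+1})/\phi(t_k)$, finite by (\ref{13z}), depend only on $g,\phi,\{t_k\}$ and on the graph through $n_*$, not on $x$ or $N$. This is what will make the resulting $\gamma$ uniform, as Definition~\ref{goodadf} demands.

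For the good-animal bound, let ${\sf A}$ be a good animal of size $N$ and split ${\sf V}({\sf A})$ into the vertices with $n(y)\leq t_{k_*}$ and, for $k\geq k_*$, the bands with $t_k<n(y)\leq t_{k+1}$. The first group contributes at most $N\,g(t_{k_*})$ to $\sum_{y}g(n(y))$ since $g$ is increasing. For $k\geq k_*$ the key observation is that $\{y\in {\sf V}({\sf A}):n(y)>t_k\}$ is $\phi(t_k)$-admissible in ${\sf A}$: two such distinct vertices satisfy $m_-(y,y')>t_k\geq n_*$, so (\ref{7}) together with $\rho_{\sf A}\geq\rho$ gives $\rho_{\sf A}(y,y')\geq\phi(t_k)$, and Lemma~\ref{anpn} bounds their number by $\max\{1;2N/\phi(t_k)\}$. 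Here goodness enters decisively: a band is nonempty only if $t_k<n_{\sf A}$, in which case $\phi(t_k)\leq\phi(n_{\sf A})\leq 2N$ by (\ref{14z}), so the maximum is always $2N/\phi(t_k)$. Bounding $g(n(y))\leq g(t_{k+1})$ on each band and summing yields
$$\sum_{y\in {\sf V}({\sf A})} g(n(y))\leq N\,g(t_{k_*})+2N\sum_{k\geq k_*}\frac{g(t_{k+1})}{\phi(t_k)}\leq N\bigl(g(t_{k_*})+2S\bigr),$$
so that $G({\sf A};g)\leq\gamma:=g(t_{k_*})+2S$ for every good animal.

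It remains to pass from good animals to the tempered property, which is where Lemma~\ref{Qpn} is used. Any ${\sf A}\in\mathcal{A}_N(x)$ is connected with $N$ vertices containing $x$, so every $y\in {\sf V}({\sf A})$ obeys $\rho(x,y)\leq\rho_{\sf A}(x,y)\leq N-1$, i.e.\ ${\sf V}({\sf A})\subset {\sf B}_{N-1}(x)$ and $n_{\sf A}\leq\max_{y\in {\sf B}_{N-1}(x)}n(y)$. In the $\mathbb{G}_{+}(\phi)$ case, (\ref{10qu}) gives, for all $N-1\geq N_x$, the bound $n_{\sf A}\leq\phi^{-1}(2(N-1))\leq\phi^{-1}(2N)$, hence $\phi(n_{\sf A})\leq 2N$ and ${\sf A}$ is good; thus (\ref{110}) holds with this $\gamma$ for all large $N$ and ${\sf G}$ is strongly $g$-tempered. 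In the $\mathbb{G}_{-}(\phi)$ case, along the sequence $\{N_k\}$ furnished by (\ref{10qq}) I take the (still strictly increasing) sequence $N=N_k+1$: then ${\sf V}({\sf A})\subset {\sf B}_{N_k}(x)$ forces $n_{\sf A}\leq\phi^{-1}(2N_k+1)=\phi^{-1}(2N-1)$, so again $\phi(n_{\sf A})\leq 2N$ and ${\sf A}$ is good, which yields (\ref{110}) along this sequence and shows ${\sf G}$ is $g$-tempered.

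The heart of the argument, and the step I expect to require the most care, is the elimination of the $\max\{1;\cdot\}$ in Lemma~\ref{anpn}. Left unchecked, the ``$1$'' contributes $\sum_{k\geq k_*}g(t_{k+1})$, which diverges; condition (\ref{14z}) is exactly what forces $\phi(t_k)\leq 2N$ on every occupied band so that the capacity estimate always lands on its $2N/\phi(t_k)$ branch and pairs with the convergent series (\ref{13z}). The rest is bookkeeping: verifying that the chosen $N$ (all large $N$, resp.\ the shifted sequence $N_k+1$) genuinely make every animal good, for which the inclusion ${\sf V}({\sf A})\subset {\sf B}_{N-1}(x)$ together with Lemma~\ref{Qpn} suffices.
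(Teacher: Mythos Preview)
Your proof is correct and follows the paper's strategy exactly: band the vertex degrees, apply Lemma~\ref{anpn} to each band to bound $G({\sf A};g)$ uniformly over good animals, then invoke Lemma~\ref{Qpn} to certify that every animal in $\mathcal{A}_N(x)$ is good for the appropriate $N$. You are in fact more careful than the paper on two points it glosses over---the contribution of the low-degree vertices (your $g(t_{k_*})$ term; the paper simply assumes $t_1=n_*$ and silently drops it) and the reason goodness forces the $\max\{1;2N/\phi(t_k)\}$ in Lemma~\ref{anpn} onto its second branch---and your shift to $N_k+1$ in the $\mathbb{G}_-(\phi)$ case is a clean way to reconcile (\ref{10qq}) with the good-animal condition (\ref{14z}).
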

{\it Proof:} First we consider the case ${\sf G}\in \mathbb{G}_{-}
(\phi)$. Let the sequence in (\ref{13z}) be such that $t_1 = n_*$.
The proof will be done by showing that: (a) the upper bound as in
(\ref{110}) holds for any good animal; (b) for each $x\in {\sf V}$,
one can pick $\{N_k\}_{k\in \mathbb{N}}$ such that each ${\sf A} \in
\mathcal{A}_{N_k} (x)$, $k\in \mathbb{N}$, is a good animal.

For ${\sf A} \in \mathcal{A}_{\rm good}$, we set
\begin{eqnarray}
  \label{g1}
{\sf M}_k ({\sf A}) & = & \{ x\in {\sf V} ({\sf A}) : n(x) \in (t_k,
t_{k+1}] \}, \quad k = 1 , \dots , l, \quad \\[.2cm] m_k ({\sf A})
& = & \left\vert {\sf M}_k ({\sf A}) \right\vert, \nonumber
\end{eqnarray}
where $l \in \mathbb{N}$ is the smallest number for which $n_{\sf A}
\leq t_{l +1}$, see (\ref{14z}). By (\ref{7}) we then get
$\rho(x,y) \geq \phi(t_k)$ for each $x,y \in {\sf M}_k ({\sf A})$.
Hence, by Lemma \ref{anpn} we have
\[
 m_k ({\sf A}) \leq C({\sf A}, \phi(t_k) ) \leq 2 |{\sf V}({\sf A})|
 /  \phi (t_k),
\]
which leads to the following estimate, cf. (\ref{110}),
\begin{equation}
  \label{g2}
G({\sf A};g) \leq \frac{1}{ |{\sf V}({\sf A})|} \sum_{k=1}^{l}
g(t_{k+1}) m_k ({\sf A}) \leq 2 \sum_{k=1}^\infty
\frac{g(t_{k+1})}{\phi(t_k)} \ \stackrel{\rm def}{=} \ \gamma (g,
\phi).
\end{equation}
Let $x$ be an arbitrary vertex.  For this  $x$, let $\{N_k\}_{k\in
\mathbb{N}}$ be the sequence  as in Lemma \ref{Qpn}. Then, for
any ${\sf A}$ such that $x\in {\sf V}({\sf A})$ and $|{\sf V}({\sf
A})| = N_1$, we have ${\sf V}({\sf A})\subset {\sf B}_{ N_1 -1} ( x)$.
Then  by (\ref{10qq})
\[
2 N_1 > 1 + 2(N_1 - 1) \geq \phi\left(\max_{y \in {\sf V}({\sf A})}
n(y)  \right),
\]
which yields ${\sf A} \in \mathcal{A}_{\rm good}$. Hence,
(\ref{g2}) holds for any ${\sf A} \in \mathcal{A}_{N_1} (x)$. Then
we repeat the same procedure with $N_2$, $N_3$, and so on.  For ${\sf G}\in \mathbb{G}_{+} (\phi)$, the
proof  follows along the same
line of arguments, with the only difference that by (\ref{10qu}) we show that
$\mathcal{A}_{N}(x)\subset \mathcal{A}_{\rm good}$ whenever $N\geq N_x$.
$\square$ \vskip.1cm
The proof of Theorem \ref{1tm} readily follows from Lemmas \ref{gpn2} and \ref{goodpn1} with $g(t) = t \log t$.
In the same way, by taking $g(t) = \log t$ (resp. $g(t) = t^{\theta+1}$, cf. (\ref{warR})) we prove Theorem \ref{2tm} (resp. Proposition \ref{RIpn}), see Lemma \ref{gepn}.

To prove Proposition \ref{5apn} we proceed as follows. Set $$S({\sf A}) = \sum_{x\in {\sf V}({\sf A})}Y_x, $$ cf. (\ref{SNx}).
Then, for $Y>0$ and $t>0$, we have, cf. (\ref{Ex}),
\begin{eqnarray*}
\mathbb{P}\bigg{(}S({\sf A}) \geq Y |{\sf V}({\sf A})|\bigg{)} & \leq & \exp( - t Y |{\sf V}({\sf A})|) \mathbb{E}\exp\left( t \sum_{x\in {\sf V}({\sf A})} Y_x \right)\\[.2cm] & = & \exp\left(- t \sum_{x\in {\sf V}({\sf A})}(Y - w_x(t)/t) \right)\\[.2cm]
& \leq & \exp\left(- t Y |{\sf V}({\sf A})| +  t C \sum_{x\in {\sf V}({\sf A})} n(x) \log n(x) \right),
\end{eqnarray*}
which holds for small enough $t>0$, see (\ref{Ex1}) and (\ref{Ex2}). For $\phi$ satisfying (\ref{U4}), the graph in question is $g$-tempered with
$g(t)= t \log t$, see Lemma \ref{goodpn1}. Given $x\in {\sf V}$, let $\{N_k\}_{k\in \mathbb{N}}$ be the sequence as in Definition \ref{goodadf}. Then, for ${\sf A} \in \mathcal{A}_{N_k} (x)$, by (\ref{110}) and the latter estimate we obtain
\begin{eqnarray*}
\mathbb{P}\bigg{(}S_{N_k}(x) \geq Y N_k \bigg{)}  \leq \exp\left( - t N_k (Y - \gamma C)\right) .
\end{eqnarray*}
Now we take $Y > \gamma C$ and obtain (\ref{ex3}) by applying the Borel-Cantelli lemma.

\section*{Acknowledgment}
This work was supported in part by the DFG through SFB 701: ``Spektrale Strukturen
und Topologische Methoden in der Mathematik"  and through the
research project 436 POL 125/113/0-1, which is cordially acknowledged by the authors.

%% The Appendices part is started with the command \appendix;
%% appendix sections are then done as normal sections
%% \appendix

%% \section{}
%% \label{}

%% References
%%
%% Following citation commands can be used in the body text:
%% Usage of \cite is as follows:
%%   \cite{key}          ==>>  [#]
%%   \cite[chap. 2]{key} ==>>  [#, chap. 2]
%%   \citet{key}         ==>>  Author [#]

%% References with bibTeX database:

\bibliographystyle{model1b-num-names}
%\bibliography{<your-bib-database>}

\begin{thebibliography}{00}

%% \bibitem must have the following form:
%%   \bibitem{key}...
%%

\bibitem{Adams} S. Adams, R. Lyons, Amenability, Kazhdan's property
and percolation for trees, groups and equivalence relations, Israel
J. Math. 75 (1991), 341--370.

\bibitem{BD} L.A. Bassalygo, R.L. Dobrushin, Uniqueness of a Gibbs field with a random potential -- an
elementary approach, Theory Probab. Appl. 31 (Translated from Russian) (1986), 572--589.

\bibitem{BM} J.A. Bondy, U.S.R. Murthy, Graph Theory with Applications,
The MacMillan Press Ltd., London and Basingstoke, 1976.

\bibitem{Burioni} R. Burioni, D. Cassi, A. Vezzani, Random walks and
physical models on infinite graphs: an introduction, in: V. A. Kaimanovich, W. de Gruyter (Eds.), Random
Walks and Geometry, Berlin, 2004.

\bibitem{MC} L. Clark, I. Gutman, The exponent in the generalized
Randi{\'c} index, J. Math. Chem. 43 (2008), 32--44.

\bibitem{Cox} T.J. Cox, A. Gandolfi, P.S. Griffin, H. Kesten,
Greedy lattice animals I: Upper bounds, Ann. Appl. Probab. 3
(1993),  1151--1169.

\bibitem{Dob} R.L. Dobrushin,  Perturbation methods of the theory of Gibbsian fields,
Lecture Notes in Math. 1648 (1996), pp. 1--66

\bibitem{FP} R. Fern{\'a}ndez, A. Procacci,
Cluster expansion for abstract polymer models. New bounds from an
old approach,  Comm. Math. Phys. 274 (2007), 123--140.

\bibitem{Gand} A. Gandolfi and H. Kesten, Greedy lattice animals II: Linear growth, Ann. Appl. Probab. 4
(1994), 76--107.

\bibitem{Gr} G. Grimmett, Probability on Graphs, Cambridge University Press, Cambridge, UK, 2010.

\bibitem{H} A. Hammond,  Greedy lattice animals: geometry and criticality, Ann. Probab. 34 (2006), 593--637.

\bibitem{Hag} O. H\"aggstr\"om, Markov random fields and percolation on general graphs,
Adv. Appl. Probab. 32 (2000), 39--66.

\bibitem{KK} D. K\c{e}pa, Y. Kozitsky, Bassalygo-Dobrushin uniqueness for continuous
spin systems on irregular graphs, Condens. Matter Phys.  11, (2007), 313--329.

\bibitem{KKU} D. K\c{e}pa, Y. Kozitsky, Uniqueness of Gibbs states of a quantum system on
graphs, Rep. Math. Phys. 59 (2007), 281--288.

\bibitem{KKP} Y. Kondratiev, Y. Kozitsky, T. Pasurek,  Gibbs random
fields with unbounded spins on unbounded degree graphs, J. Appl. Probab. 47 (2010),  856--875.

\bibitem{KP} R. Koteck{\'y}, D. Preiss,  Cluster expansion for abstract polymer
models, Comm. Math. Phys. 103 (1986),  491--498.

\bibitem{MM} V.A. Malyshev, R.A. Minlos, Gibbs Random Fields, Kluwer (Translated from Russian), 1990.

\bibitem{M} J.B. Martin, Linear growth for greedy lattice animals, Stochastic Process. Appl. 98 (2002), 43--66.

\bibitem{PU} S. Poghosyan, D. Ueltschi,  Abstract cluster expansion with applications
to statistical mechanical systems, J. Math. Phys. 50 (2009), 053509, 17 pp.

\bibitem{Procacci} A. Procacci, B. Scoppola, Convergent expansions
for random cluster model with $q>0$ on infinite graphs, Commun. Pure
Appl. Anal. 7 (2008), 1145--1178.


 \end{thebibliography}

%% Authors are advised to submit their bibtex database files. They are
%% requested to list a bibtex style file in the manuscript if they do
%% not want to use model1b-num-names.bst.

%% References without bibTeX database:

\end{document}